\def\blfootnote{\xdef\@thefnmark{}\@footnotetext}
\newcommand{\bracleft}{\left[\!\!\left[}
\newcommand{\bracright}{\right]\!\!\right]}
\newcommand{\pairleft}{\left<\!\left<}
\newcommand{\pairright}{\right>\!\right>}
\newcommand{\lie}{\pounds}
\begin{document}
\title{On the Geometry of Multi-Dirac Structures and Gerstenhaber Algebras}
\author{Joris Vankerschaver$^{a, b}$, Hiroaki Yoshimura$^c$,
Melvin Leok$^a$}

\maketitle

\blfootnote{\noindent %
$^a$ Department of Mathematics, University of California at San Diego, 9500 Gilman Drive, San Diego CA 92093, USA.  Email: \{jvankers, mleok\}@math.ucsd.edu. \\
$^b$ Department of Mathematics, Ghent University, Krijgslaan 281, B-9000 Ghent, Belgium. \\
$^c$ Applied Mechanics and Aerospace Engineering, Waseda University, Okubo, Shinjuku, Tokyo, 169-8555, Japan.  Email:  yoshimura@waseda.jp.
}


\begin{center}
{\it We dedicate this paper to the memory of \\ Jerrold E. Marsden, our friend and mentor.}
\end{center}

\begin{abstract}
	In a companion paper, we introduced a notion of multi-Dirac structures, a graded version of Dirac structures, and we discussed their relevance for classical field theories.  In the current paper we focus on the geometry of multi-Dirac structures.  After recalling the basic definitions, we introduce a graded multiplication and a multi-Courant bracket on the space of sections of a multi-Dirac structure, so that the space of sections has the structure of a Gerstenhaber algebra. We then show that the graph of a $k$-form on a manifold gives rise to a multi-Dirac structure and also that this multi-Dirac structure is integrable if and only if the corresponding form is closed.  Finally, we show that the multi-Courant bracket endows a subset of the ring of differential forms with a graded Poisson bracket, and we relate this bracket to some of the multisymplectic brackets found in the literature.
\end{abstract}


\section{Introduction}

Dirac structures were introduced by \cite{Courant1990} as a simultaneous generalization of pre-symplectic and Poisson structures.  It was quickly realized that these structures have remarkable properties that make them into a fundamental tool in geometry as well as in classical mechanics, where they are used to describe mechanical systems with symmetry, constraints, or interconnected systems; see \cite{MarsdenDirac, YoMa2006, DaSc1999} for more details.

The present paper arose out of the need for a similar structure to describe \emph{classical field theories}.  Based on a variational principle for field theory, in \cite{VaYoLeMa2011} we defined a multi-Dirac structure of degree $n$ on a manifold to be a subbundle of the exterior algebra 
\begin{equation} \label{gradedbundle}
	E := \mbox{$\bigwedge^\bullet$} (TZ) \oplus \mbox{$\bigwedge^ \bullet $} (T^\ast Z)
\end{equation}
 of multivector fields and forms satisfying a certain {\it maximally isotropic property} reminiscent of the usual maximally isotropic property of Dirac structures. The advantages of the use of multi-Dirac structures for the description of classical field theories is that they yield the field equations in implicit form and are hence well-suited for field theories with constrained momenta due to gauge symmetry or nonholonomic constraints.

In the current paper, which should be seen as a companion paper to \cite{VaYoLeMa2011}, we describe the mathematical properties of multi-Dirac structures in somewhat greater detail.  We recall the definition of a multi-Dirac structure $D$ and show that there exists a graded multiplication and a graded bracket (referred to as the {\it multi-Courant bracket}) on the space of sections of $D$ and that the latter is endowed with the structure of a {\it Gerstenhaber algebra} with respect to these two operations.  The multi-Courant bracket then allows us to define a multi-Poisson bracket on a distinguished subset of the space of forms and we show that this bracket satisfies the graded Jacobi identity up to exact forms, while the multi-Courant bracket satisfies the graded Jacobi identity exactly, as required by the definition of a Gerstenhaber algebra.  

The multi-Dirac structures introduced in this paper also turn out to have unexpected links with other kinds of Dirac-like structures.   Not only do multi-Dirac structures include standard Dirac structures developed by \cite{Courant1990} as a special case, it was also shown by \cite{Zambon2010} that multi-Dirac structures are in a one-to-one correspondence with \emph{higher-order Dirac structures}.   While this correspondence is relatively easy to state, the implications are not entirely straightforward: while multi-Dirac structures give rise to Gerstenhaber algebras, higher-order Dirac structures are related to \emph{$L_\infty$-algebras}, an observation due to \cite{Rogers2010b} for multisymplectic forms and extended to higher-order Dirac structures by \cite{Zambon2010}.  The relation between both algebraic structures, however, is as yet unclear.

Furthermore, the multi-Dirac structures in this paper generalize the ``multi-Poisson'' bracket of \cite{CaIbLe1996} on the space of Hamiltonian forms.  However, while the latter satisfies the graded Jacobi identity only up to exact forms,  our structures (through the associated Gerstenhaber algebra) satisfy the graded Jacobi and Leibniz identities exactly, making them easier to understand.  Finally, while Gerstenhaber structures were studied previously in field theory (see \cite{Kanatchikov1995}), our approach does not rely on any extraneous structures for its definition, such as the choice of a background metric or connection, and is therefore more canonical.

\paragraph{Conventions.}

Throughout this paper, all manifolds and maps are smooth.  We denote the exterior algebra of the tangent and cotangent bundle of $Z$ by $\bigwedge^ \bullet (TZ)$ and $\bigwedge^ \bullet (T^\ast Z)$, respectively.  Multivector fields are sections of the former, while forms are sections of the latter.  For the signs in the definition of the Schouten-Nijenhuis bracket on $\bigwedge^ \bullet (TZ)$, we use the convention of \cite{FoPaRo2005} and \cite{Marle1997}.  A brief overview of the most important properties of multivector fields can be found in appendix~\ref{sec:lie}.

\paragraph{Acknowledgements.}  

We are very grateful to Henrique Bursztyn, Frans Cantrijn,  Mark Gotay, Alberto Ibort, Eduardo Mart\'{\i}nez, Chris Rogers, Alan Weinstein and Marco Zambon who kindly provided several very useful remarks.  
  
The research of H. Y. is partially supported by JSPS Grant-in-Aid 20560229, JST-CREST and Waseda University Grant for SR 2010A-606.  M. L. and J. V. are supported by NSF CAREER award DMS-1010687.  J. V. is on leave from a Postdoctoral Fellowship of the Research Foundation--Flanders (FWO-Vlaanderen) and would like to thank JSPS for financial support during a research visit to Waseda University, where part of this work was carried out. This work is part of the Irses project GEOMECH (nr. 246981) within the 7th European Community Framework Programme.

\section{The Graded Courant Bracket}

In this section, we show that the space of sections of the exterior algebra \eqref{gradedbundle} of multivector fields and forms on a manifold $Z$ is equipped with a graded multiplication and a bracket of degree -1.  Furthermore, we show that closed forms on $Z$ give rise to automorphisms of these structures. This makes the exterior bundle $\bigwedge^ \bullet(TZ) \oplus \bigwedge^ \bullet(T^\ast Z)$ into the graded analogue of $TZ \oplus T^\ast Z$ equipped with the usual Courant bracket (see \cite{Courant1990}).  As the latter is the standard example of a Courant algebroid, we can consider $\bigwedge^ \bullet(TZ) \oplus \bigwedge^ \bullet(T^\ast Z)$ as a ``graded Courant algebroid.''

\paragraph{Multivectors and Forms.}

Let $Z$ be an arbitrary manifold.  We denote the $k$-th exterior power of $TZ$ and $T^\ast Z$ by $\bigwedge^k(TZ)$ and $\bigwedge^k(T^\ast Z)$, respectively. We consider a form-valued pairing between $\Sigma\in\bigwedge^k(T^\ast Z)$ and $\Gamma\in\bigwedge^l(TZ)$, given by 
\begin{equation} \label{pairing1}
	\left< \Sigma, \Gamma \right> = \mathbf{i}_\Gamma \Sigma \in \mbox{$\bigwedge$}^{k-l}(T^\ast Z),
\end{equation}
for $k\geq l$, and $\left< \Sigma, \Gamma \right>=0$ for $k<l$.  For fixed $n \le \dim Z$, we introduce the product bundles 
\begin{equation} \label{Dr}
	L_r := \mbox{$\bigwedge$}^r(TZ) \times_Z \mbox{$\bigwedge$}^{n + 1 - r}(T^\ast Z), 
\end{equation}
where $r = 1, \ldots, n$.  Using the pairing \eqref{pairing1}, we can then define  graded commutative and  graded anticommutative pairings between the elements of $L_r$ and $L_s$ (for $r, s = 1, \ldots, n$) as follows.  For $(\Gamma, \Sigma) \in L_r$ and $(\Gamma', \Sigma') \in L_s$, we put 
\begin{align} \label{antisymm}
	\left<\!\left< (\Gamma, \Sigma), (\Gamma', \Sigma') \right>\!\right>_-
	:=&  \frac{1}{2} \left( \left<\Sigma,\Gamma'\right> - (-1)^{rs} \left<\Sigma',\Gamma\right> \right)\notag\\
	=&  \frac{1}{2} \left( \mathbf{i}_{\Gamma'} \Sigma - (-1)^{rs} \mathbf{i}_{\Gamma} \Sigma' \right),
\end{align}
and
\begin{align} \label{symm}
	\left<\!\left< (\Gamma, \Sigma), (\Gamma', \Sigma') \right>\!\right>_+
	:=&  \frac{1}{2} \left( \left<\Sigma,\Gamma'\right> + (-1)^{rs} \left<\Sigma',\Gamma\right> \right)\notag\\
	=&  \frac{1}{2} \left(
		\mathbf{i}_{\Gamma'} \Sigma + (-1)^{rs} \mathbf{i}_\Gamma \Sigma'
		\right).
\end{align}
Note that both pairings take values in $\bigwedge^{n + 1 - r - s}(T^\ast Z)$.  As a consequence, both pairings are trivially zero whenever $r + s > n + 1$.

There is a slight difference between our notation and the one used by \cite{Courant1990}.  Here, the subscript `+' and `-' in the pairing reflects the fact that the pairing is graded commutative and graded anti-commutative, respectively, but this is opposite to the notational convention of Courant.

\paragraph{The Multi-Courant Bracket and Multiplication on the Space of Sections.}  

There exists a wedge product between sections of $L_r$ and $L_s$, defined in the following way.  For $(\Gamma, \Sigma) \in L_r$ and $(\Gamma', \Sigma') \in L_s$, with $r + s \le n$, we define
\begin{align}
	(\Gamma, \Sigma) \wedge (\Gamma', \Sigma') & := 
	\left(\Gamma \wedge \Gamma', \pairleft (\Gamma, \Sigma), (\Gamma', \Sigma') \pairright_+ \right) \nonumber \\
	& \phantom{:}= \left( \Gamma \wedge \Gamma', \frac{1}{2} \left( \mathbf{i}_{\Gamma'} \Sigma + (-1)^{rs} \mathbf{i}_\Gamma \Sigma' \right) \right).
		\label{mult}
\end{align}
Note that $(\Gamma, \Sigma) \wedge (\Gamma', \Sigma') \in L_{r + s}$.   If $r + s > n$, we let $(\Gamma, \Sigma) \wedge (\Gamma', \Sigma') = 0$.

Furthermore, there exists a canonical bracket on the space of sections of $L$ which is reminiscent of the usual Courant bracket.  This multi-Courant bracket is defined by means of the following bracket on the homogeneous sections.  On sections of $L_r \times L_s$, define the multi-Courant bracket  $\bracleft \cdot, \cdot \bracright : L_r \times L_s \rightarrow L_{r + s - 1}$ by
\begin{multline} \label{courant}
	\bracleft (\Gamma, \Sigma),  (\Gamma', \Sigma') \bracright  \\
	 := 
	\left( [\Gamma, \Gamma'], (-1)^{(r - 1)s} \pounds_{\Gamma} \Sigma'
		+ (-1)^s \pounds_{\Gamma'} \Sigma 
		- (-1)^s \mathbf{d} 
		\left<\!\left< (\Gamma, \Sigma), (\Gamma', \Sigma') 
			\right>\!\right>_+ \right) 
\end{multline}
when $r + s \le n + 1$.  When $r + s > n + 1$, we define the bracket to be zero.
In the above, $[\Gamma, \Gamma']$ is the Schouten-Nijenhuis bracket of the multivector fields $\Gamma$ and $\Gamma'$, and the Lie derivatives denote the generalized Lie derivatives of a form along a multivector field.  Note that \eqref{courant} takes values in $L_{r + s - 1}$.   Some properties of the Schouten-Nijenhuis bracket and the generalized Lie derivative are described in Appendix~\ref{sec:lie}.

\paragraph{Automorphisms of the Multi-Courant Bracket.}  

Just like in the case of the standard Courant bracket, there exists a distinguished class of automorphisms of the multi-Courant bracket, defined by means of closed forms as follows. For every closed $(n + 1)$-form $\sigma$, define the mapping $\Phi_\sigma: L_r \rightarrow L_r$ by
\begin{equation} \label{automorphism}
	\Phi_\sigma (\Gamma, \Sigma) = (\Gamma, \Sigma + \mathbf{i}_\Gamma \sigma).
\end{equation}
For notational convenience, we let $L := \oplus_{r = 1}^n L_r$ and we denote the map induced by \eqref{automorphism} on $L$ by $\Phi_\sigma: L \to L$.

\begin{proposition}  For every closed $(n + 1)$-form $\sigma$, the associated map $\Phi_\sigma: L \rightarrow L$  is an automorphism of the multiplication \eqref{mult} and the multi-Courant bracket \eqref{courant}:
\begin{equation} \label{autcourant}
	\Phi_\sigma \left( \bracleft (\Gamma, \Sigma), (\Gamma', \Sigma') \bracright \right) = 
		\bracleft \Phi_\sigma (\Gamma, \Sigma), \Phi_\sigma (\Gamma', \Sigma') \bracright
\end{equation}
and 
\begin{equation} \label{autmult}
	\Phi_\sigma \left( (\Gamma, \Sigma) \wedge  (\Gamma', \Sigma') \right) 
	= \Phi_\sigma  (\Gamma, \Sigma) \wedge  \Phi_\sigma (\Gamma', \Sigma') 
\end{equation}
for all $(\Gamma, \Sigma), (\Gamma', \Sigma') \in L$.
\end{proposition}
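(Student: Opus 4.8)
The plan is to verify the two identities \eqref{autmult} and \eqref{autcourant} by direct computation, exploiting the fact that $\Phi_\sigma$ leaves the multivector component of a section untouched and only shifts the form component by $\mathbf{i}_\Gamma\sigma$. Consequently, in each identity the ``unshifted'' contributions coincide on both sides automatically, and the whole claim reduces to checking that the extra $\sigma$-dependent terms match. First I would record the one algebraic identity on which everything hinges, namely
\[
	\mathbf{i}_{\Gamma\wedge\Gamma'}\sigma = \frac{1}{2}\left(\mathbf{i}_{\Gamma'}\mathbf{i}_\Gamma\sigma + (-1)^{rs}\mathbf{i}_\Gamma\mathbf{i}_{\Gamma'}\sigma\right),
\]
which follows at once from $\mathbf{i}_{\Gamma\wedge\Gamma'} = \mathbf{i}_{\Gamma'}\mathbf{i}_\Gamma$ together with the graded commutation rule $\mathbf{i}_\Gamma\mathbf{i}_{\Gamma'} = (-1)^{rs}\mathbf{i}_{\Gamma'}\mathbf{i}_\Gamma$ for interior products (Appendix~\ref{sec:lie}). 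Note that this holds for an arbitrary $(n+1)$-form, closed or not.

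For the multiplication \eqref{autmult} I would expand both sides. Applying $\Phi_\sigma$ to $(\Gamma,\Sigma)\wedge(\Gamma',\Sigma')$ adds $\mathbf{i}_{\Gamma\wedge\Gamma'}\sigma$ to the form component, whereas forming $\Phi_\sigma(\Gamma,\Sigma)\wedge\Phi_\sigma(\Gamma',\Sigma')$ adds instead $\frac{1}{2}(\mathbf{i}_{\Gamma'}\mathbf{i}_\Gamma\sigma + (-1)^{rs}\mathbf{i}_\Gamma\mathbf{i}_{\Gamma'}\sigma)$; the displayed identity shows these agree. In particular the multiplication is preserved for every $(n+1)$-form, and closedness plays no role here.

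The substantive part is the bracket \eqref{autcourant}. Here the first (multivector) components of both sides equal $[\Gamma,\Gamma']$, so applying $\Phi_\sigma$ on the left adds $\mathbf{i}_{[\Gamma,\Gamma']}\sigma$ to the form component; on the right, inserting $\Sigma + \mathbf{i}_\Gamma\sigma$ and $\Sigma' + \mathbf{i}_{\Gamma'}\sigma$ into \eqref{courant} and using the identity above to simplify the pairing term produces the extra contribution
\[
	(-1)^{(r-1)s}\pounds_\Gamma\mathbf{i}_{\Gamma'}\sigma + (-1)^s\pounds_{\Gamma'}\mathbf{i}_\Gamma\sigma - (-1)^s\mathbf{d}\,\mathbf{i}_{\Gamma\wedge\Gamma'}\sigma.
\]
Thus the proposition reduces to the single operator identity asserting that this expression equals $\mathbf{i}_{[\Gamma,\Gamma']}\sigma$. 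To prove it I would rewrite each generalized Lie derivative through the graded Cartan formula $\pounds_P = \mathbf{i}_P\mathbf{d} - (-1)^{|P|}\mathbf{d}\,\mathbf{i}_P$, use $\mathbf{i}_{\Gamma\wedge\Gamma'} = \mathbf{i}_{\Gamma'}\mathbf{i}_\Gamma$, and crucially invoke $\mathbf{d}\sigma = 0$ to kill the terms in which $\mathbf{d}$ lands directly on $\sigma$; the surviving terms then reorganize, via the interior-product characterization of the Schouten--Nijenhuis bracket recorded in Appendix~\ref{sec:lie}, into $\mathbf{i}_{[\Gamma,\Gamma']}\sigma$.

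I expect the main obstacle to be the sign bookkeeping in this last step. The identity genuinely depends on $\mathbf{d}\sigma = 0$ (unlike the multiplication), and closing it requires using exactly the sign conventions of \cite{FoPaRo2005} and \cite{Marle1997} for $\pounds_P$ and for the Schouten--Nijenhuis bracket, so that the factors $(-1)^{(r-1)s}$, $(-1)^s$ and the grading-dependent signs in the Cartan and Schouten identities conspire correctly. A useful consistency check is the case $r = s = 1$, where $\Gamma, \Gamma'$ are vector fields, $\pounds$ is the ordinary Lie derivative, and the identity collapses to the familiar relation $\pounds_\Gamma\mathbf{i}_{\Gamma'}\sigma - \pounds_{\Gamma'}\mathbf{i}_\Gamma\sigma + \mathbf{d}\,\mathbf{i}_{\Gamma'}\mathbf{i}_\Gamma\sigma = \mathbf{i}_{[\Gamma,\Gamma']}\sigma$ for a closed form $\sigma$, which is verified directly from Cartan's magic formula.
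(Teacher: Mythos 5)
Your strategy coincides with the paper's own proof: both arguments reduce each identity to the extra $\sigma$-dependent terms, observe that \eqref{autmult} holds for an \emph{arbitrary} $(n+1)$-form (the paper states explicitly that it ``holds without any conditions on $\sigma$''), and close \eqref{autcourant} by showing the residual terms
\[
	(-1)^{(r-1)s}\pounds_\Gamma\mathbf{i}_{\Gamma'}\sigma + (-1)^s\pounds_{\Gamma'}\mathbf{i}_\Gamma\sigma - (-1)^s\mathbf{d}\,\mathbf{i}_{\Gamma'}\mathbf{i}_{\Gamma}\sigma
\]
equal $\mathbf{i}_{[\Gamma,\Gamma']}\sigma$; in fact the paper's computation gives exactly $\mathbf{i}_{[\Gamma,\Gamma']}\sigma - (-1)^r\mathbf{i}_{\Gamma\wedge\Gamma'}\mathbf{d}\sigma$, so closedness enters precisely where you predict, and your preliminary identity $\mathbf{i}_{\Gamma\wedge\Gamma'} = \mathbf{i}_{\Gamma'}\mathbf{i}_\Gamma$ together with graded commutativity of interior products is also how the pairing term collapses.

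One concrete correction is needed before your sketch executes, however: the Cartan formula you quote, $\pounds_P = \mathbf{i}_P\mathbf{d} - (-1)^{|P|}\mathbf{d}\,\mathbf{i}_P$, is not the paper's convention. Appendix~\ref{sec:lie} defines $\pounds_\Gamma\alpha = \mathbf{d}\,\mathbf{i}_\Gamma\alpha - (-1)^k\mathbf{i}_\Gamma\mathbf{d}\alpha$ for $\Gamma$ of degree $k$; the two expressions agree when $k$ is odd but differ by an overall sign when $k$ is even. Since the bracket \eqref{courant} and the Koszul identity \eqref{koszulid} are both stated in the paper's convention, substituting your version would flip the sign of $(-1)^{(r-1)s}\pounds_\Gamma\mathbf{i}_{\Gamma'}\sigma$ and $(-1)^s\pounds_{\Gamma'}\mathbf{i}_\Gamma\sigma$ whenever $r$ or $s$ is even, and the reorganization into $\mathbf{i}_{[\Gamma,\Gamma']}\sigma$ would fail; your consistency check at $r = s = 1$ cannot detect this precisely because the conventions coincide in odd degree. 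With the Appendix formula in place of yours the bookkeeping closes exactly as you describe: one has $(-1)^s\pounds_{\Gamma'}\mathbf{i}_\Gamma\sigma - (-1)^s\mathbf{d}\,\mathbf{i}_{\Gamma'}\mathbf{i}_\Gamma\sigma = -\mathbf{i}_{\Gamma'}\mathbf{d}\,\mathbf{i}_\Gamma\sigma = -\mathbf{i}_{\Gamma'}\pounds_\Gamma\sigma - (-1)^r\mathbf{i}_{\Gamma'}\mathbf{i}_\Gamma\mathbf{d}\sigma$, and then \eqref{koszulid} converts $(-1)^{(r-1)s}\pounds_\Gamma\mathbf{i}_{\Gamma'}\sigma - \mathbf{i}_{\Gamma'}\pounds_\Gamma\sigma$ into $\mathbf{i}_{[\Gamma,\Gamma']}\sigma$, which is the paper's route verbatim.
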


\begin{proof}
We have that the right-hand side of \eqref{autcourant} is given by 
\begin{multline*}
\bracleft \Phi_\sigma (\Gamma, \Sigma), \Phi_\sigma (\Gamma', \Sigma') \bracright 
	 = \\
	 \bracleft  (\Gamma, \Sigma),  (\Gamma', \Sigma') \bracright 
	 + \Big( 0, (-1)^{(r-1)s} \lie_\Gamma \mathbf{i}_{\Gamma'} \sigma 
		+(-1)^{s} \lie_{\Gamma'} \mathbf{i}_\Gamma \sigma
		- (-1)^s \mathbf{d} \mathbf{i}_{\Gamma'} \mathbf{i}_{\Gamma} \sigma\Big)
\end{multline*}
The terms involving $\sigma$ can be rewritten as 
\begin{align*}
(-1)^{(r-1)s} \lie_\Gamma \mathbf{i}_{\Gamma'} \sigma 
		& +(-1)^{s} \lie_{\Gamma'} \mathbf{i}_\Gamma \sigma
		- (-1)^s \mathbf{d} \mathbf{i}_{\Gamma'} \mathbf{i}_{\Gamma} \sigma\\
	& = 
	(-1)^{(r-1)s} \lie_\Gamma \mathbf{i}_{\Gamma'} \sigma
	- \mathbf{i}_{\Gamma'} \pounds_\Gamma \sigma 
	- (-1)^r \mathbf{i}_{\Gamma'} \mathbf{i}_\Gamma \mathbf{d} \sigma \\
	& = \mathbf{i}_{[\Gamma, \Gamma']} \sigma - 
		(-1)^r \mathbf{i}_{\Gamma \wedge \Gamma'} \mathbf{d}  \sigma,
\end{align*}
where we have used the Koszul identity \eqref{koszulid} in the last step.  Since $\mathbf{d}  \sigma = 0$, this shows us that 
\begin{align*}
	\bracleft \Phi_\sigma (\Gamma, \Sigma), \Phi_\sigma (\Gamma', \Sigma') \bracright 
	& = \bracleft  (\Gamma, \Sigma),  (\Gamma', \Sigma') \bracright +
		(0, \mathbf{i}_{[\Gamma, \Gamma']} \sigma) \\
	& = \Phi_\sigma \left( \bracleft (\Gamma, \Sigma), (\Gamma', \Sigma') \bracright \right).
\end{align*}

For the proof of \eqref{autmult}, we observe that 
\begin{align*}
\Phi_\sigma \left( (\Gamma, \Sigma) \wedge  (\Gamma', \Sigma') \right) & = 
	\Phi_\sigma\left( \Gamma \wedge \Gamma',  \mathbf{i}_{\Gamma'} \Sigma \right) \\
	& = \left(\Gamma \wedge \Gamma', \frac{1}{2} \left(\mathbf{i}_{\Gamma'} \Sigma 
		+ (-1)^{rs} \mathbf{i}_\Gamma \Sigma' \right)
	+ \mathbf{i}_{\Gamma \wedge \Gamma'} \sigma \right),
\end{align*}
while 
\begin{align*}
\Phi_\sigma  (\Gamma, \Sigma) \wedge  \Phi_\sigma (\Gamma', \Sigma') & = 
	(\Gamma, \Sigma + \mathbf{i}_\Gamma \sigma) \wedge 
		(\Gamma', \Sigma' + \mathbf{i}_{\Gamma'} \sigma) \\
		& = \left(\Gamma \wedge \Gamma', \frac{1}{2} \big[\mathbf{i}_{\Gamma'}( \Sigma + \mathbf{i}_\Gamma \sigma) 
			+ (-1)^{rs} \mathbf{i}_{\Gamma}( \Sigma' + \mathbf{i}_{\Gamma'} \sigma) \big]
		\right)
\end{align*}
so that \eqref{autmult} holds without any conditions on $\sigma$.
\end{proof}

\section{Geometry of Multi-Dirac Structures}

In this section, we define a multi-Dirac structure on a manifold $Z$ as a subbundle of $\bigwedge^ \bullet(TZ) \oplus \bigwedge^ \bullet(T^\ast Z)$ which is maximally isotropic with respect to the graded antisymmetric pairing $\left<\!\left< \cdot, \cdot \right>\!\right>_-$ introduced in \eqref{antisymm}.  After defining a notion of integrability for multi-Dirac structures, we then prove the main result of this paper (Theorem~\ref{thm:gerstenhaber}): the space of sections of an integrable multi-Dirac structure can be equipped with the structure of a Gerstenhaber algebra.

\begin{definition}
Let $V_r$ be a subbundle of $L_r$. The \textbf{$s$-orthogonal complement} of $V_r$ is the subbundle 
$(V_r)^{\perp, s}$ of $L_s$ which consists of all $(\Gamma', \Sigma') \in L_s$ such that 
\[
	\left<\!\left< (\Gamma, \Sigma), (\Gamma', \Sigma') \right>\!\right>_- = 0 
	\quad \text{for all $(\Gamma, \Sigma) \in V_r$}.
\]
\end{definition}
Note that $(V_r)^{\perp, s} \subset L_s$ for $r + s \le n + 1$ and $(V_r)^{\perp, s} = L_s$ whenever $r + s > n + 1$, irrespective of $V_r$.

  We can now introduce graded multi-Dirac structures in an analogous way to the original definition of standard Dirac structures by \cite{Courant1990}.

\begin{definition}
A {\bfi multi-Dirac structure of degree $n$ on $Z$} is a sequence of subbundles $D_r \subset L_r$, $r = 1, \ldots n$ satisfying the \emph{maximally $s$-isotropic property}:
\begin{equation} \label{isotropy}
	(D_r)^{\perp, s} = D_s 	
\end{equation}
for $r, s$ such that $r + s \le n + 1$.  We set $D := D_1 \oplus \ldots \oplus D_{n}$, so that $D \subset \bigwedge^ \bullet(TZ) \oplus \bigwedge^ \bullet(T^\ast Z)$ and we say that $D$ is \emph{maximally isotropic}.
\end{definition}

Multi-Dirac structures have a number of properties which are reminiscent of standard Dirac structures.  The remainder of this section is devoted to these properties.

\paragraph{Integrability of Multi-Dirac Structures.}  

A special class of Dirac structures consists of those for which the space of sections is closed under the wedge product and the multi-Courant bracket.  Note first that for a multi-Dirac structure, the wedge product and multi-Courant bracket take on slightly simpler forms:
\begin{equation} \label{dirmult}
	(\Gamma, \Sigma) \wedge (\Gamma', \Sigma') = ( \Gamma \wedge \Gamma',  \mathbf{i}_{\Gamma'} \Sigma )
\end{equation}
and 
\begin{equation} \label{dircourant}
	\bracleft (\Gamma, \Sigma),  (\Gamma', \Sigma') \bracright  
	 = 
	\left( [\Gamma, \Gamma'], (-1)^{(r-1)s}\lie_\Gamma \Sigma' - 
		\mathbf{i}_{\Gamma'} \mathbf{d} \Sigma
	 \right)
\end{equation}
for all $(\Gamma, \Sigma) \in D_r$ and $(\Gamma', \Sigma') \in D_s$, where $r, s = 1, \ldots, n$.  Now, it is easy to show that the wedge product always takes values in $D$: with the notations of above, and with $(\Gamma'', \Sigma'') \in D_t$, we have 
\begin{align*}
\left<\!\left<(\Gamma, \Sigma) \wedge (\Gamma', \Sigma'), (\Gamma'', \Sigma'') \right>\!\right>_-
	& =  \frac{1}{2} \left( \mathbf{i}_{\Gamma''} \mathbf{i}_{\Gamma'} \Sigma - (-1)^{(r+s)t} \mathbf{i}_{\Gamma \wedge \Gamma'} \Sigma'' \right)
	\\
	& =  \frac{(-1)^{st}}{2} \mathbf{i}_{\Gamma'} \left(  \mathbf{i}_{\Gamma''} \Sigma - (-1)^{rt} \mathbf{i}_{\Gamma} \Sigma'' \right) \\
	& = (-1)^{st} \mathbf{i}_{\Gamma'}
	\left<\!\left<(\Gamma, \Sigma), (\Gamma'', \Sigma'') \right>\!\right>_-,
\end{align*}
which is identically zero.  Hence, the wedge product of two sections of $D$ is always again a section of $D$.  In contrast, the multi-Courant bracket of two sections of $D$ is not necessarily again a section of $D$.  When this is nevertheless the case, we will say that $D$ is an integrable multi-Dirac structure.

\begin{definition}
	A multi-Dirac structure $D$ of degree $n$ is said to be \textbf{integrable} if the space of sections of $D$ is closed under the multi-Courant bracket \eqref{courant}.  That is, for all $(\Gamma, \Sigma) \in D_r$ and $(\Gamma', \Sigma') \in D_s$ (where $r, s = 1, \ldots, n$) the following holds:
	\[
		\bracleft (\Gamma, \Sigma),  (\Gamma', \Sigma') \bracright \in D_{r + s - 1}.
	\]
\end{definition}

The integrability of multi-Dirac structures can be determined by the vanishing of a certain {\bfi integrability tensor}, which we now define.  First of all, consider a multi-Dirac structure $D = D_1 \oplus \cdots \oplus D_n$ of degree $n$ and let $(\Gamma, \Sigma) \in D_r$, $(\Gamma', \Sigma') \in D_s$, and $(\Gamma'', \Sigma'') \in D_t$ be arbitrary homogeneous sections of $D$ and consider the following map: 
\begin{equation} \label{inttensor}
	T_D( (\Gamma, \Sigma), (\Gamma', \Sigma'), (\Gamma'', \Sigma'') )
	=  2\left<\!\left<(\Gamma, \Sigma), \bracleft (\Gamma', \Sigma'), (\Gamma'', \Sigma'') \bracright \right>\!\right>_-.
\end{equation}
In appendix~\ref{app:tensor}, we show that $T_D$ is a tensor.  

By definition, $D$ is integrable if and only if $T_D$ vanishes identically.  In the following lemma, we give a clearer expression for $T_D$.

\begin{lemma}\label{T_D_expression}
	The integrability tensor $T_D$ can be written as 
	\begin{multline} \label{integrability}
		T_D((\Gamma, \Sigma), (\Gamma', \Sigma'), (\Gamma'', \Sigma''))
		=
		(-1)^{t(r-1)} \Big[
			- (-1)^{(r + s)t} \mathbf{d} \mathbf{i}_{\Gamma'} 
				\mathbf{i}_{\Gamma''} \Sigma \\ 
		+ (-1)^{s(t - 1)} \mathbf{i}_{\Gamma'} \pounds_\Gamma \Sigma'' + (-1)^{r(s - 1)} \mathbf{i}_{\Gamma} \pounds_{\Gamma''} \Sigma'
+ (-1)^{t(r - 1)} \mathbf{i}_{\Gamma''} \pounds_{\Gamma'} \Sigma
		\Big]
	\end{multline}
	where $(\Gamma, \Sigma) \in D_r$, $(\Gamma', \Sigma') \in D_s$, and $(\Gamma'', \Sigma'') \in D_t$.
\end{lemma}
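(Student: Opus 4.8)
The plan is to evaluate the definition \eqref{inttensor} of $T_D$ directly and then reduce the outcome to \eqref{integrability} using only the graded Cartan calculus together with the defining maximal isotropy of $D$. First I would insert the simplified multi-Courant bracket \eqref{dircourant},
\[
	\bracleft (\Gamma', \Sigma'), (\Gamma'', \Sigma'') \bracright
	= \left( [\Gamma', \Gamma''],\ (-1)^{(s-1)t}\lie_{\Gamma'}\Sigma'' - \mathbf{i}_{\Gamma''}\mathbf{d}\Sigma' \right) \in L_{s+t-1},
\]
into \eqref{inttensor} and expand the graded antisymmetric pairing \eqref{antisymm}. Since the second argument now lives in $L_{s+t-1}$, this produces exactly three terms: the Schouten term $\mathbf{i}_{[\Gamma', \Gamma'']}\Sigma$, a scalar multiple of $\mathbf{i}_\Gamma \lie_{\Gamma'}\Sigma''$, and a scalar multiple of $\mathbf{i}_\Gamma\mathbf{i}_{\Gamma''}\mathbf{d}\Sigma'$.

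Next I apply the Koszul identity \eqref{koszulid} to the Schouten term, writing
\[
	\mathbf{i}_{[\Gamma', \Gamma'']}\Sigma
	= (-1)^{(s-1)t}\lie_{\Gamma'}\mathbf{i}_{\Gamma''}\Sigma - \mathbf{i}_{\Gamma''}\lie_{\Gamma'}\Sigma,
\]
whose second summand already matches the final term of \eqref{integrability}. The remaining task is to turn the terms that involve only a single one of the three forms into the ``democratic'' combination appearing on the right of \eqref{integrability}. The two instruments are the graded Cartan formula $\lie_\Gamma = \mathbf{i}_\Gamma\mathbf{d} - (-1)^{r}\mathbf{d}\mathbf{i}_\Gamma$ (and its analogues for $\Gamma'$ and $\Gamma''$), used to shuffle the exterior derivative between the inside, middle and outside of a composition, and the maximal isotropy relations $\mathbf{i}_{\Gamma'}\Sigma = (-1)^{rs}\mathbf{i}_\Gamma\Sigma'$, $\mathbf{i}_{\Gamma''}\Sigma = (-1)^{rt}\mathbf{i}_\Gamma\Sigma''$, $\mathbf{i}_{\Gamma''}\Sigma' = (-1)^{st}\mathbf{i}_{\Gamma'}\Sigma''$, which express the maximal isotropy \eqref{isotropy} and, being genuine identities between differential forms, may be differentiated freely. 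Applying Cartan to $\lie_{\Gamma'}\mathbf{i}_{\Gamma''}\Sigma$ peels off the outer-derivative term $\mathbf{d}\mathbf{i}_{\Gamma'}\mathbf{i}_{\Gamma''}\Sigma$ (the first term of \eqref{integrability}) and leaves $\mathbf{i}_{\Gamma'}\mathbf{d}\mathbf{i}_{\Gamma''}\Sigma$; feeding an isotropy relation into the latter and applying Cartan once more produces $\mathbf{i}_{\Gamma'}\lie_\Gamma\Sigma''$. A parallel manipulation of the two surviving terms (first swap the form by isotropy, then trade $\mathbf{i}\mathbf{d}$ for $\lie$ via Cartan) delivers $\mathbf{i}_\Gamma\lie_{\Gamma''}\Sigma'$.

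I expect the main obstacle to be twofold. The lesser part is the sign bookkeeping: each use of Cartan, of the graded commutativity $\mathbf{i}_\Gamma\mathbf{i}_{\Gamma'} = (-1)^{rs}\mathbf{i}_{\Gamma'}\mathbf{i}_\Gamma$, and of an isotropy swap introduces a sign depending on $r, s, t$, and one must check that after factoring out $(-1)^{t(r-1)}$ the four internal coefficients $-(-1)^{(r+s)t}$, $(-1)^{s(t-1)}$, $(-1)^{r(s-1)}$ and $(-1)^{t(r-1)}$ emerge, using the conventions of \cite{FoPaRo2005} and \cite{Marle1997}. The more substantive part is verifying that the spurious terms carrying an interior exterior derivative — the various $\mathbf{i}\mathbf{i}\mathbf{d}\Sigma_\bullet$ and $\mathbf{i}\mathbf{d}\mathbf{i}\Sigma_\bullet$ contributions generated along the way, none of which appears in \eqref{integrability} — cancel. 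Organizing these according to which form they differentiate, one checks that the contributions of each type occur in cancelling pairs, by virtue of the isotropy relations (in differentiated form) together with $\mathbf{d}^2 = 0$. One should also record that whenever a degree sum exceeds $n+1$ the relevant pairings, brackets and interior products vanish identically, so that the identity and every intermediate isotropy relation hold without further hypotheses. Collecting the four surviving terms then yields \eqref{integrability}.
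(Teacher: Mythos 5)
Your proposal follows the paper's own proof essentially step for step: both expand the definition \eqref{inttensor} using the simplified bracket \eqref{dircourant} into the same three terms (a $\mathbf{i}_\Gamma\pounds_{\Gamma'}\Sigma''$ term, a $\mathbf{i}_\Gamma\mathbf{i}_{\Gamma''}\mathbf{d}\Sigma'$ term, and the Schouten term), apply the Koszul identity \eqref{koszulid} to $\mathbf{i}_{[\Gamma',\Gamma'']}\Sigma$, and then use the generalized Cartan formula together with the isotropy relations (correctly treated as identities of forms that may be differentiated) to extract the four terms of \eqref{integrability} while the spurious contracted-$\mathbf{d}$ terms cancel. The only difference is that the paper carries out the sign bookkeeping explicitly (through its intermediate expression for $T_D$), which your sketch defers but correctly identifies as the remaining routine work.
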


\begin{proof}
The expression \eqref{inttensor} is given in full by 
\[
	T_D =   (-1)^{(s-1)t} \underbrace{\mathbf{i}_{\Gamma}
	\pounds_{\Gamma'} \Sigma''}_{\text{(A)}}
	- \underbrace{\mathbf{i}_{\Gamma} 
	\mathbf{i}_{\Gamma''} \mathbf{d} \Sigma'}_{\text{(B)}}
	- (-1)^{r(s + t - 1)} \underbrace{\mathbf{i}_{[\Gamma', \Gamma'']} 
		\Sigma}_{\text{(C)}},
\]
where we have suppressed the arguments of $T_D$ for the sake of clarity.  We now tackle terms (A), (B), and (C) individually.  

Term (A) can be rewritten as 
\begin{align*}
\mathbf{i}_{\Gamma}
	\pounds_{\Gamma'} \Sigma'' & = 
		\mathbf{i}_{\Gamma} \mathbf{d} \mathbf{i}_{\Gamma'} \Sigma''
		- (-1)^s \mathbf{i}_{\Gamma} \mathbf{i}_{\Gamma'} \mathbf{d} \Sigma'' \\
		& = 
		\mathbf{i}_{\Gamma} \mathbf{d} \mathbf{i}_{\Gamma'} \Sigma''
		- (-1)^{rs + s} \mathbf{i}_{\Gamma'} \mathbf{i}_{\Gamma} \mathbf{d} \Sigma'' \\
		& = \mathbf{i}_{\Gamma} \mathbf{d} \mathbf{i}_{\Gamma'} \Sigma''
		- (-1)^{(r - 1)(s - 1)} \left( \mathbf{i}_{\Gamma'} \pounds_\Gamma \Sigma'' - \mathbf{i}_{\Gamma'} \mathbf{d} \mathbf{i}_{\Gamma} \Sigma'' \right). 
\end{align*}
For term (B), we have 
\[
\mathbf{i}_{\Gamma} 
	\mathbf{i}_{\Gamma''} \mathbf{d} \Sigma' = (-1)^t \mathbf{i}_{\Gamma}
	\left( \mathbf{d} \mathbf{i}_{\Gamma''} \Sigma' 
		- \pounds_{\Gamma''} \Sigma' \right),
\]
while term (C) is given by 
\begin{align*}
\mathbf{i}_{[\Gamma', \Gamma'']}  \Sigma
& = (-1)^{(s - 1)t} \pounds_{\Gamma'} \mathbf{i}_{\Gamma''} \Sigma
- \mathbf{i}_{\Gamma''} \pounds_{\Gamma'} \Sigma \\
& = (-1)^{(s - 1)t} \left( \mathbf{d} \mathbf{i}_{\Gamma'} \mathbf{i}_{\Gamma''} \Sigma - (-1)^s \mathbf{i}_{\Gamma'} \mathbf{d}
\mathbf{i}_{\Gamma''} \Sigma \right) 
- \mathbf{i}_{\Gamma''} \pounds_{\Gamma'} \Sigma.
\end{align*}
Collecting all of these expressions together, we have that 
\begin{multline} \label{intermediatetd}
	T_D = 
	- (-1)^{(s - 1)t + (r - 1)(s - 1)}  \mathbf{i}_{\Gamma'} \pounds_\Gamma \Sigma'' + (-1)^t \mathbf{i}_{\Gamma}\pounds_{\Gamma''} \Sigma' \\
	- (-1)^{r(s + t - 1)} \left( 
	(-1)^{(s - 1)t} \mathbf{d} \mathbf{i}_{\Gamma'} \mathbf{i}_{\Gamma''} \Sigma -
	\mathbf{i}_{\Gamma''} \pounds_{\Gamma'} \Sigma
	\right),
\end{multline}
where we have used the fact that
\begin{align*}
	\mathbf{i}_{\Gamma'} \Sigma'' - 
		(-1)^{st} \mathbf{i}_{\Gamma''} \Sigma' &=2\left<\!\left<(\Gamma'',\Sigma''),(\Gamma',\Sigma')\right>\!\right>= 0\\
\intertext{and} 
	\mathbf{i}_{\Gamma} \Sigma'' - 
		(-1)^{rt} \mathbf{i}_{\Gamma''} \Sigma &=2\left<\!\left<(\Gamma'',\Sigma''),(\Gamma,\Sigma)\right>\!\right>= 0,
\end{align*}
since $(\Gamma, \Sigma), (\Gamma', \Sigma')$, and $(\Gamma'', \Sigma'')$ are elements of $D$.  After rearranging the signs in \eqref{intermediatetd}, we obtain \eqref{integrability}.
\end{proof}

We now arrive at the main result of this section: the space of sections of an integrable multi-Dirac structure forms a Gerstenhaber algebra (for the definition of a Gerstenhaber algebra, see \cite{Ge1963, Kanatchikov1995, CaWe1999}).

\begin{theorem} \label{thm:gerstenhaber}
	Let $D$ be an integrable multi-Dirac structure of order $n$.  The space of sections of $D$ forms a \textbf{Gerstenhaber algebra}: for all $(\Gamma, \Sigma) \in D_r$, $(\Gamma', \Sigma') \in D_s$, and $(\Gamma'', \Sigma'') \in D_t$, we have
	\begin{itemize}
		\item Graded anti-commutativity: $\bracleft (\Gamma, \Sigma), (\Gamma', \Sigma') \bracright = - (-1)^{(r-1)(s-1)} \bracleft (\Gamma', \Sigma'), (\Gamma, \Sigma) \bracright$;
		\item Graded Leibniz identity: 
		\begin{multline*}
			\bracleft (\Gamma, \Sigma), 
				(\Gamma', \Sigma') \wedge (\Gamma'', \Sigma'')
			\bracright = \\
			\bracleft (\Gamma, \Sigma), 
				(\Gamma', \Sigma') \bracright \wedge (\Gamma'', \Sigma'')
			+ (-1)^{(r - 1)s} (\Gamma', \Sigma') \wedge
			\bracleft (\Gamma, \Sigma), 
				(\Gamma'', \Sigma'') \bracright;
		\end{multline*}
	\item Graded Jacobi identity: 
	\begin{multline} \label{gradj}
		\bracleft (\Gamma, \Sigma), 
		\bracleft (\Gamma', \Sigma'), 
			(\Gamma'', \Sigma'') \bracright \bracright
		+ (-1)^{(t-1)(r + s)} 
		\bracleft (\Gamma'', \Sigma''), 
		\bracleft (\Gamma, \Sigma), 
			(\Gamma', \Sigma') \bracright \bracright \\
		+ (-1)^{(r - 1)(s + t)} 
				\bracleft (\Gamma', \Sigma'), 
		\bracleft (\Gamma'', \Sigma''), 
			(\Gamma, \Sigma) \bracright \bracright = 0
	\end{multline}
	\end{itemize}
\end{theorem}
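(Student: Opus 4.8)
The plan is to verify the three axioms by splitting every section of $D$ into its multivector and form components and treating them separately. For the multivector (first) component, the graded anti-commutativity, the Leibniz rule and the Jacobi identity of the bracket \eqref{dircourant} reduce verbatim to the corresponding properties of the Schouten--Nijenhuis bracket $[\cdot,\cdot]$ on $\bigwedge^\bullet(TZ)$, which are standard and recalled in Appendix~\ref{sec:lie}. Hence in each case only the form (second) component requires work. Throughout I would use the simplified expressions \eqref{dirmult} and \eqref{dircourant}, valid on a multi-Dirac structure, together with the defining isotropy relation $\mathbf{i}_{\Gamma'}\Sigma = (-1)^{rs}\mathbf{i}_\Gamma\Sigma'$ that follows from $\pairleft(\Gamma,\Sigma),(\Gamma',\Sigma')\pairright_- = 0$ for sections of $D$ (the maximal isotropy \eqref{isotropy}), and the generalized Cartan formula $\lie_\Gamma = \mathbf{d}\,\mathbf{i}_\Gamma - (-1)^r\mathbf{i}_\Gamma\,\mathbf{d}$.

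For graded anti-commutativity I would expand the form component $(-1)^{(r-1)s}\lie_\Gamma\Sigma' - \mathbf{i}_{\Gamma'}\mathbf{d}\Sigma$ of $\bracleft(\Gamma,\Sigma),(\Gamma',\Sigma')\bracright$ by the Cartan formula and compare it with the analogous expansion of $\bracleft(\Gamma',\Sigma'),(\Gamma,\Sigma)\bracright$; the $\mathbf{d}\mathbf{i}$-type and $\mathbf{i}\mathbf{d}$-type terms then match after applying the isotropy relation and its differentiated form $\mathbf{d}\mathbf{i}_{\Gamma'}\Sigma = (-1)^{rs}\mathbf{d}\mathbf{i}_\Gamma\Sigma'$, and a short sign count produces the factor $-(-1)^{(r-1)(s-1)}$. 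For the graded Leibniz identity, using \eqref{dirmult} the form component of the left-hand side is $(-1)^{(r-1)(s+t)}\lie_\Gamma\mathbf{i}_{\Gamma''}\Sigma' - \mathbf{i}_{\Gamma'\wedge\Gamma''}\mathbf{d}\Sigma$, while the two terms on the right contribute $(-1)^{(r-1)s}\mathbf{i}_{\Gamma''}\lie_\Gamma\Sigma' - \mathbf{i}_{\Gamma''}\mathbf{i}_{\Gamma'}\mathbf{d}\Sigma$ and $(-1)^{(r-1)s}\mathbf{i}_{[\Gamma,\Gamma'']}\Sigma'$. The $\mathbf{d}$-terms agree via $\mathbf{i}_{\Gamma'\wedge\Gamma''} = \mathbf{i}_{\Gamma''}\mathbf{i}_{\Gamma'}$, and the remaining terms coincide exactly by the Koszul identity \eqref{koszulid}, $\mathbf{i}_{[\Gamma,\Gamma'']} = (-1)^{(r-1)t}\lie_\Gamma\mathbf{i}_{\Gamma''} - \mathbf{i}_{\Gamma''}\lie_\Gamma$. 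Note that neither identity uses integrability, only the maximal isotropy of $D$.

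The graded Jacobi identity \eqref{gradj} is the heart of the theorem and the only place where integrability enters. Writing $e_1 = (\Gamma,\Sigma)$, $e_2 = (\Gamma',\Sigma')$, $e_3 = (\Gamma'',\Sigma'')$ of degrees $r,s,t$, I would compute the form component of the cyclic sum of double brackets by iterating \eqref{dircourant}. Expanding $\bracleft e_1,\bracleft e_2,e_3\bracright\bracright$ and its two cyclic partners produces three families of terms: iterated Lie derivatives $\lie_{\Gamma_i}\lie_{\Gamma_j}\Sigma_k$, mixed terms $\lie_{\Gamma_i}\mathbf{i}_{\Gamma_k}\mathbf{d}\Sigma_j$, and contraction-by-bracket terms $\mathbf{i}_{[\Gamma_j,\Gamma_k]}\mathbf{d}\Sigma_i$. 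Simplifying via the graded commutator $[\lie_{\Gamma_i},\lie_{\Gamma_j}] = \lie_{[\Gamma_i,\Gamma_j]}$, the Koszul identity \eqref{koszulid}, the Cartan formula and $\mathbf{d}^2 = 0$, the outcome (mirroring the standard Courant-algebroid computation) should be that all non-exact contributions cancel identically, while the $\mathbf{d}$-exact remainder reorganizes into $\mathbf{d}$ applied to a cyclic combination of the pairings $\pairleft e_i,\bracleft e_j,e_k\bracright\pairright_-$. By the definition \eqref{inttensor} these pairings are exactly $\frac{1}{2}T_D$ on the corresponding orderings; since $D$ is integrable we have $T_D \equiv 0$ (equivalently $\bracleft e_j,e_k\bracright \in D$, which is orthogonal to $D$ by maximal isotropy), so the argument of $\mathbf{d}$ vanishes and the form component of the Jacobiator is zero. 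Together with the Schouten--Nijenhuis Jacobi identity for the multivector component, this establishes \eqref{gradj}.

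The main obstacle is precisely this last computation: propagating the Koszul signs correctly through the threefold iterated brackets and checking that the non-exact terms cancel, leaving only a $\mathbf{d}$-exact remainder expressible through $T_D$. Lemma~\ref{T_D_expression} should shorten this step considerably, since the explicit expression \eqref{integrability} already records the result of applying the isotropy relations to $\pairleft e_i,\bracleft e_j,e_k\bracright\pairright_-$; I would therefore match the surviving terms of the Jacobiator directly against the three cyclic instances of \eqref{integrability} rather than re-deriving them. A secondary bookkeeping point is to confirm that the prefactors $(-1)^{(t-1)(r+s)}$ and $(-1)^{(r-1)(s+t)}$ in \eqref{gradj} are exactly those generated by the cyclic reorganization, so that the three double brackets assemble into a single $T_D$-cyclic sum rather than an unbalanced combination.
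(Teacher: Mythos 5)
Your proposal is correct and follows essentially the same route as the paper: the first two identities are verified directly from maximal isotropy (the paper leaves them to the reader), and the graded Jacobi identity is proved by showing, via the Koszul identity, the generalized Cartan formula and $\mathbf{d}^2 = 0$, that the Jacobiator equals up to a sign $\mathbf{d}$ applied to the integrability tensor $T_D$ of Lemma~\ref{T_D_expression}, which vanishes by integrability. One small caveat: your aside that the Leibniz identity uses no integrability is not quite accurate as you verify it, since computing the form component of $\bracleft (\Gamma, \Sigma), (\Gamma', \Sigma') \bracright \wedge (\Gamma'', \Sigma'')$ via the simplified product \eqref{dirmult} presupposes that the bracket pairs isotropically with $(\Gamma'', \Sigma'')$ (i.e., lies in $D$), which is exactly integrability; this is harmless here because the theorem assumes $D$ integrable, but without that assumption you would have to use the full product \eqref{mult} and track the extra $T_D$-type terms.
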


\begin{proof}
The proofs of the first two properties are tedious but straightforward verifications, which we leave to the reader.  For the proof of the Jacobi identity, we proceed as follows.  We first introduce the {\bfi Jacobiator} $\mathcal{J}((\Gamma, \Sigma), (\Gamma', \Sigma'), (\Gamma'', \Sigma''))$ as the left-hand side of \eqref{gradj}.  Our goal is to show that $\mathcal{J} = 0$ when restricted to $D$.

With the notations of the theorem, we have that 
\begin{multline*}
\bracleft (\Gamma, \Sigma), 
		\bracleft (\Gamma', \Sigma'), 
			(\Gamma'', \Sigma'') \bracright \bracright =
	\\ \left( [\Gamma, [\Gamma', \Gamma'']],  
		(-1)^{(r - 1)(s + t - 1)} 
		\pounds_{\Gamma} \left( (-1)^{(s - 1)t} \pounds_{\Gamma'} \Sigma
		- \mathbf{i}_{\Gamma''} \mathbf{d} \Sigma' \right) 
		- \mathbf{i}_{[\Gamma', \Gamma'']} \mathbf{d} \Sigma \right)
\end{multline*}
and similar expressions for the other terms in the Jacobi identity can be obtained through cyclic permutation.  We now collect all the terms that involve $\Sigma$, finding
\[
	- \mathbf{i}_{[\Gamma', \Gamma'']} \mathbf{d} \Sigma
	+ (-1)^t \pounds_{\Gamma''} \mathbf{i}_{\Gamma'} \mathbf{d} \Sigma
	+ (-1)^{st + 1} \pounds_{\Gamma'} \pounds_{\Gamma''} \Sigma,
\]
which can be rewritten as 
\[
	(-1)^{s + t} \left( 
		\mathbf{d} \mathbf{i}_{[\Gamma', \Gamma'']} \Sigma 
		+ \pounds_{\Gamma''} \mathbf{d} \mathbf{i}_{\Gamma'} \Sigma 
	\right).
\]
Putting everything together, the left-hand side of \eqref{gradj} can then be written as 
\begin{multline} \label{jacobiator}
	\mathcal{J}((\Gamma, \Sigma), (\Gamma', \Sigma'), (\Gamma'', \Sigma''))
	=	(-1)^{s + t} \left( 
		\mathbf{d} \mathbf{i}_{[\Gamma', \Gamma'']} \Sigma 
		+ \pounds_{\Gamma''} \mathbf{d} \mathbf{i}_{\Gamma'} \Sigma 
	\right) \\
	+ (-1)^{(r - 1)(s + t) + r + t} 
	\left(  \mathbf{d} \mathbf{i}_{[\Gamma'', \Gamma]} \Sigma' 
		+ \pounds_{\Gamma} \mathbf{d} \mathbf{i}_{\Gamma''} \Sigma'  \right) 
	+ (-1)^{t(r + s)} 
		\left(  \mathbf{d} \mathbf{i}_{[\Gamma, \Gamma']} \Sigma'' 
		+ \pounds_{\Gamma'} \mathbf{d} \mathbf{i}_{\Gamma} \Sigma''  \right).
\end{multline}
Now we rewrite the terms involving Schouten-Nijenhuis brackets using the Koszul identity \eqref{koszulid} as follows: 
\begin{align*}
	\mathbf{d} \mathbf{i}_{[\Gamma', \Gamma'']} \Sigma
	& = (-1)^{(s - 1)(t - 1)} \pounds_{\Gamma'} \mathbf{d} \mathbf{i}_{\Gamma''} \Sigma - \mathbf{d} \mathbf{i}_{\Gamma''} \pounds_{\Gamma'} \Sigma \\
	& = (-1)^{(s - 1)(t - 1) + rt}
	\pounds_{\Gamma'} \mathbf{d} \mathbf{i}_{\Gamma} \Sigma'' - \mathbf{d} \mathbf{i}_{\Gamma''} \pounds_{\Gamma'} \Sigma, 
\end{align*}
and similarly for the other terms.  Note that the first term on the right hand side cancels exactly with the last term in \eqref{jacobiator}.  Similar cancellations occur for the other terms as well, and we are eventually left with 
\begin{align*}
	&\mathcal{J}((\Gamma, \Sigma), (\Gamma', \Sigma'), (\Gamma'', \Sigma''))
	\\
	&\quad= - (-1)^{s + rt} \mathbf{d} 
	\left( 
		(-1)^{(r - 1)t} \mathbf{i}_{\Gamma''} \pounds_{\Gamma'} \Sigma
		+ (-1)^{(s - 1)r} \mathbf{i}_{\Gamma} \pounds_{\Gamma''} \Sigma'
		+ (-1)^{(t - 1)s} \mathbf{i}_{\Gamma'} \pounds_{\Gamma} \Sigma''
	\right)\\
	&\quad= - (-1)^{rs + rt + r + s  + t} \mathbf{d} \left( 
	T_D((\Gamma, \Sigma), (\Gamma', \Sigma'), (\Gamma'', \Sigma''))
	\right),
\end{align*}
where we used Lemma~\ref{T_D_expression} and the fact that $\mathbf{d}^2=0$. Hence, the Jacobiator is simply the exterior derivative of the integrability tensor, and the Jacobi identity is satisfied whenever the multi-Dirac structure is integrable, i.e., when $T_D \equiv 0$.
\end{proof}

Finally, let $D = D_1 \oplus \cdots \oplus D_n$ be an integrable multi-Dirac structure on a manifold $Z$.  The sections of $D$ form a Gerstenhaber algebra, but are also ``anchored'' to the base manifold $Z$ in a special way.  To elucidate this, we recall that the exterior algebra $\bigwedge^ \bullet (TZ)$ of multi-vector fields, equipped with the wedge product and the Schouten-Nijenhuis bracket, is the standard example of a Gerstenhaber algebra (see \cite{Marle1997}).  It is not hard to show that the projection map $\rho : D \to \bigwedge^ \bullet (TZ)$, given by 
\[
	\rho((\Gamma_1, \Sigma_1), \ldots, (\Gamma_n, \Sigma_n) )
	= ( \Gamma_1, \ldots, \Gamma_n)
\]
is a homomorphism of Gerstenhaber algebras.  In most cases, this map is neither injective nor surjective.

\section{Examples}

\subsection{Dirac Structures and Higher-Order Dirac Structures} \label{sec:hodirac}

\paragraph{Dirac Structures.}  Let $D$ be a multi-Dirac structure of degree one on a manifold $Z$.  We claim that $D$ is a standard Dirac structure in the sense of \cite{Courant1990}.  Indeed, $D$ is by definition a subbundle of $TZ \oplus T^\ast Z$, and the graded anti-commutative pairing \eqref{antisymm} becomes in this case
\[
	\left<\!\left< (\Gamma, \Sigma), (\Gamma', \Sigma') \right>\!\right>_-
	=  \frac{1}{2} \left( \mathbf{i}_{\Gamma'} \Sigma + \mathbf{i}_{\Gamma} \Sigma' \right),
\]
where $\Gamma, \Gamma \in TZ$ and $\Sigma, \Sigma' \in T^\ast Z$.  The requirement that $D$ is maximally isotropic under this pairing then makes $D$ into a Dirac structure.

It is also instructive to consider the multi-Courant bracket in the degree-one case.  Putting $r = s = 1$ in \eqref{courant}, we have that the bracket on $TZ \oplus T^\ast Z$ is given by 
\[
	\bracleft (\Gamma, \Sigma),  (\Gamma', \Sigma') \bracright 
	 = 
	\left( [\Gamma, \Gamma'],  \pounds_{\Gamma} \Sigma'
		- \pounds_{\Gamma'} \Sigma 
		- \frac{1}{2} \mathbf{d} 
		\left( \mathbf{i}_{\Gamma'} \Sigma - 
			\mathbf{i}_\Gamma \Sigma' \right) \right), 
\]
which is precisely the standard bracket of \cite{Courant1990}.  We conclude that Dirac structures are a special case of multi-Dirac structures.

\paragraph{Higher-Order Dirac Structures.}  Dirac structures in the sense of \cite{Courant1990} fit into a hierarchy of higher-order structures, which were described by \cite{Hitchin2003}, \cite{Gualtieri2004} and \cite{Zambon2010}; see also the references therein.  We now show that every multi-Dirac structure induces a higher-order Dirac structure.  Under some regularity assumptions, \cite{Zambon2010} has shown that there is in fact a one-to-one correspondence between multi-Dirac structures and higher-order Dirac structures.

Let $D$ be a multi-Dirac structure of degree $n$ on $Z$, and consider the component $D_{1} \subset L_1 = TZ \oplus \bigwedge^n (T^\ast Z)$.  We claim that $D_1$ is in itself a higher-order Dirac structure of degree $n$.  This follows when we examine the pairing \eqref{antisymm}, restricted to elements of $L_1$:
\[
	\left<\!\left< (\Gamma, \Sigma), (\Gamma', \Sigma') \right>\!\right>_-
	=  \frac{1}{2} \left( \mathbf{i}_{\Gamma'} \Sigma + \mathbf{i}_{\Gamma} \Sigma' \right),
\]
where now $\Gamma, \Gamma' \in TZ$ and $\Sigma, \Sigma' \in \bigwedge^n(Z)$.  As a special case of the maximally isotropic property \eqref{isotropy}, we then have that 
\[
	(D_1)^{\perp, 1} = D_1, 
\]
which makes $D_1$ into a higher-order Dirac structure of degree $n$.  Restricting the multi-Courant bracket to sections of $L_1$, we have that the bracket is again given by 
\[
	\bracleft (\Gamma, \Sigma),  (\Gamma', \Sigma') \bracright 
	 = 
	\left( [\Gamma, \Gamma'],  \pounds_{\Gamma} \Sigma'
		- \pounds_{\Gamma'} \Sigma 
		- \frac{1}{2} \mathbf{d} 
		\left( \mathbf{i}_{\Gamma'} \Sigma - 
			\mathbf{i}_\Gamma \Sigma' \right) \right), 
\]
which is nothing but the Courant bracket on sections of $L_1$.

\paragraph{Links with Lie Algebroids.}  Let $D = D_1 \oplus \cdots D_n$ be an integrable multi-Dirac structure on a manifold $Z$.   We have just demonstrated that $D_1$ is a higher-order Dirac structure.  As a result, $D_1$ is a Lie algebroid over $Z$ with bracket the multi-Courant bracket restricted to $D_1$ and anchor the projection $\rho : D_1 \to TZ$ onto the first factor.  The image $\rho(D_1)$ of $\rho$ forms an integrable singular distribution on $Z$ so that $Z$ is foliated by integral submanifolds.  It can now be shown that these leaves are pre-multisymplectic manifolds, that is, they are equipped with a form $\Omega_D$ of degree $n$ which is closed but not necessarily nondegenerate.\footnote{This observation, and the construction of $\Omega_D$, are due to Eduardo Mart\'{\i}nez.}  The form $\Omega_D$ is defined as follows: let $(v_1, \alpha_1), \ldots, (v_n, \alpha_n)$ be sections of $D_1$, and define
\[
	\Omega_D(v_1, \ldots, v_n) := \alpha_n(v_1, \ldots, v_{n-1}).
\]
Because of isotropy, $\Omega_D$ is totally antisymmetric in all of its arguments.

\subsection{The Graph of a Differential Form}

In this section, we show that the graph (in some suitable sense) of a differential form determines a multi-Dirac structure.   This multi-Dirac structure was already defined in \cite{VaYoLeMa2011}. We refer to that paper for an overview of applications in classical field theory, and for proofs of the basic theorems.  In this paper, we investigate the geometric aspects of the multi-Dirac structure in further detail: in particular, we show that the resulting multi-Dirac structure is integrable if and only if the underlying form is closed.

Let $\Omega$ be a form of degree $n + 1$.  We define the graph of $\Omega$ as the sequence of subspaces $D_r \subset L_r$ given by 
\begin{equation} \label{canondirac}
	D_r := \left\{ (\Gamma, \mathbf{i}_\Gamma \Omega)\in L_r\,\,|\,\,\Gamma \in \mbox{$\bigwedge$}^r(TZ) \right\}
\end{equation}
 for $r = 1, \ldots, n$.

\begin{proposition}   \label{prop:canondirac}
The sequence \eqref{canondirac} of bundles $D_r$ satisfies the maximally $s$-isotropic property \eqref{isotropy}:  whenever $r, s$ are such that $r + s \le n + 1$, the following holds:
\[
	(D_r)^{\perp, s} = D_s. 
\]	
Hence, the direct sum $D = D_1 \oplus \cdots \oplus D_n$ is a multi-Dirac structure of degree $n$.
\end{proposition}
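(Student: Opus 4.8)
The plan is to establish the two inclusions whose conjunction is the asserted equality $(D_r)^{\perp, s} = D_s$: the \emph{isotropy} inclusion $D_s \subseteq (D_r)^{\perp, s}$ and the \emph{maximality} inclusion $(D_r)^{\perp, s} \subseteq D_s$. Throughout, the single algebraic fact I would lean on is the graded commutativity of interior products by multivector fields: for $\Gamma \in \bigwedge^r(TZ)$ and $\Gamma' \in \bigwedge^s(TZ)$ one has $\mathbf{i}_{\Gamma'} \mathbf{i}_\Gamma = (-1)^{rs} \mathbf{i}_\Gamma \mathbf{i}_{\Gamma'}$ as operators on forms (equivalently, both sides agree up to sign with $\mathbf{i}_{\Gamma \wedge \Gamma'}$). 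This follows from the anticommutativity $\mathbf{i}_X \mathbf{i}_Y = - \mathbf{i}_Y \mathbf{i}_X$ of contractions by vector fields, extended to the generalized interior product recalled in Appendix~\ref{sec:lie}.

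For the isotropy inclusion, I would take $(\Gamma, \mathbf{i}_\Gamma \Omega) \in D_r$ and $(\Gamma', \mathbf{i}_{\Gamma'} \Omega) \in D_s$ and substitute directly into \eqref{antisymm}, obtaining
\[
	\pairleft (\Gamma, \mathbf{i}_\Gamma \Omega), (\Gamma', \mathbf{i}_{\Gamma'} \Omega) \pairright_- = \frac{1}{2}\left( \mathbf{i}_{\Gamma'} \mathbf{i}_\Gamma \Omega - (-1)^{rs} \mathbf{i}_\Gamma \mathbf{i}_{\Gamma'} \Omega \right).
\]
Applying the commutation identity to the first term shows that the two contributions cancel, so the pairing vanishes identically and $D_s \subseteq (D_r)^{\perp, s}$.

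For the maximality inclusion, I would start from an arbitrary $(\Gamma', \Sigma') \in (D_r)^{\perp, s}$, so that $\pairleft (\Gamma, \mathbf{i}_\Gamma \Omega), (\Gamma', \Sigma') \pairright_- = 0$ for every $\Gamma \in \bigwedge^r(TZ)$. Writing this out and using the commutation identity to move the contraction by $\Gamma'$ past that by $\Gamma$, the condition collapses to $\mathbf{i}_\Gamma(\mathbf{i}_{\Gamma'} \Omega - \Sigma') = 0$ for all $\Gamma \in \bigwedge^r(TZ)$. It then remains to conclude that the $(n+1-s)$-form $\beta := \mathbf{i}_{\Gamma'} \Omega - \Sigma'$ vanishes, and this is the step I expect to be the main obstacle. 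The relevant fact is that a $p$-form annihilated by contraction with every decomposable $r$-vector must be zero precisely when $p \geq r$, because in that range the scalars $\beta(X_1, \ldots, X_r, Y_1, \ldots, Y_{p-r})$ recover all components of $\beta$. Here $p = n + 1 - s$, and the hypothesis $r + s \le n + 1$ is \emph{exactly} the inequality $p \geq r$; so the argument applies pointwise (decomposable $r$-vectors span $\bigwedge^r(T_z Z)$), giving $\beta = 0$, i.e. $\Sigma' = \mathbf{i}_{\Gamma'} \Omega$ and $(\Gamma', \Sigma') \in D_s$.

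Combining the two inclusions yields $(D_r)^{\perp, s} = D_s$ whenever $r + s \le n + 1$, which is the maximally $s$-isotropic property \eqref{isotropy}; hence $D = D_1 \oplus \cdots \oplus D_n$ is a multi-Dirac structure of degree $n$. The only point requiring genuine care is the final non-degeneracy step, where the degree bookkeeping $n + 1 - s \geq r$ must be tracked carefully; everything else is bilinear substitution and sign chasing.
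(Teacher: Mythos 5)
Your proof is correct and complete. There is nothing in the paper to compare it against: the paper's ``proof'' of Proposition~\ref{prop:canondirac} consists entirely of a citation to the companion paper \cite{VaYoLeMa2011}, so your write-up supplies exactly the argument the paper omits. Both halves check out against the paper's conventions. With the contraction convention $\mathbf{i}_\Gamma \alpha = \mathbf{i}_{X_r} \cdots \mathbf{i}_{X_1} \alpha$ of Appendix~\ref{sec:lie}, one has $\mathbf{i}_{\Gamma'} \mathbf{i}_\Gamma = \mathbf{i}_{\Gamma \wedge \Gamma'} = (-1)^{rs} \mathbf{i}_\Gamma \mathbf{i}_{\Gamma'}$ (the same identity the paper itself uses, implicitly, when verifying that the wedge product of sections of $D$ stays in $D$), and this both annihilates the pairing \eqref{antisymm} on $D_r \times D_s$ and collapses the orthogonality condition to $\mathbf{i}_\Gamma \left( \mathbf{i}_{\Gamma'} \Omega - \Sigma' \right) = 0$ for all $r$-multivectors $\Gamma$, as you claim. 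Your identification of the hypothesis $r + s \le n + 1$ with the nondegeneracy threshold $n + 1 - s \ge r$ is exactly where the degree restriction enters, and it is consistent with the paper's remark following the definition of the $s$-orthogonal complement that $(V_r)^{\perp, s} = L_s$ whenever $r + s > n + 1$ (in that range $\mathbf{i}_\Gamma \beta$ vanishes for degree reasons, so maximality genuinely fails and the restriction is necessary, not an artifact of the method). The one point you rightly flag yourself -- that the complement is defined fibrewise, so the final step needs only that decomposable $r$-vectors span $\bigwedge^r(T_z Z)$ at each $z \in Z$ -- is handled correctly.
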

\begin{proof} See \cite{VaYoLeMa2011}.
%
%
\end{proof}

We now turn to the question whether this multi-Dirac structure is integrable.

\begin{proposition}
	The canonical multi-Dirac structure \eqref{canondirac} is integrable if and only if the underlying form $\Omega$ is closed: $\mathbf{d} \Omega = 0$.
\end{proposition}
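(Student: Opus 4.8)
The plan is to use the characterization already established in this section: the multi-Dirac structure $D$ is integrable if and only if the integrability tensor $T_D$ vanishes identically. So I would compute $T_D$ on the canonical structure \eqref{canondirac} and show that it is controlled precisely by $\mathbf{d}\Omega$. The decisive simplification is that every section of $D_r$ has the special form $(\Gamma, \mathbf{i}_\Gamma \Omega)$, so I would substitute $\Sigma = \mathbf{i}_\Gamma \Omega$, $\Sigma' = \mathbf{i}_{\Gamma'} \Omega$, and $\Sigma'' = \mathbf{i}_{\Gamma''} \Omega$ into the expression \eqref{integrability} from Lemma~\ref{T_D_expression}. My expectation is that after this substitution the four terms of $T_D$ collapse, by the Koszul identity \eqref{koszulid}, into a single term proportional to $\mathbf{i}_{\Gamma}\mathbf{i}_{\Gamma'}\mathbf{i}_{\Gamma''}\mathbf{d}\Omega$ (up to sign and grading factors).

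First I would establish the backward direction, which is the easy one: if $\mathbf{d}\Omega = 0$, then each Lie derivative $\pounds_\Gamma(\mathbf{i}_{\Gamma''}\Omega)$ appearing in \eqref{integrability} can be rewritten, via the generalized Cartan formula $\pounds_\Gamma = \mathbf{d}\,\mathbf{i}_\Gamma - (-1)^{r}\mathbf{i}_\Gamma \mathbf{d}$, in terms of exact pieces and a term containing $\mathbf{d}\Omega$, the latter vanishing. The remaining exact contributions should cancel in pairs against the leading $\mathbf{d}\,\mathbf{i}_{\Gamma'}\mathbf{i}_{\Gamma''}(\mathbf{i}_\Gamma\Omega)$ term, so that $T_D \equiv 0$ and integrability follows. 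I would carry out this bookkeeping by repeatedly applying the Koszul identity to convert interior products of the bracket $[\Gamma,\Gamma']$ into Lie-derivative combinations, exactly as in the proof of Theorem~\ref{thm:gerstenhaber}.

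For the forward direction I would show that integrability forces $\mathbf{d}\Omega = 0$. Having reduced $T_D$ to (a nonzero multiple of) $\mathbf{i}_{\Gamma}\mathbf{i}_{\Gamma'}\mathbf{i}_{\Gamma''}\mathbf{d}\Omega$, I would argue that if $T_D$ vanishes for all homogeneous multivector fields $\Gamma \in \bigwedge^r, \Gamma' \in \bigwedge^s, \Gamma'' \in \bigwedge^t$ with $r+s+t \le n+1$, then in particular taking $r=s=t=1$ and letting $\Gamma,\Gamma',\Gamma''$ range over all decomposable vector fields shows that every contraction $\mathbf{i}_{v_1}\mathbf{i}_{v_2}\mathbf{i}_{v_3}\mathbf{d}\Omega$ vanishes. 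Since the $(n+2)$-form $\mathbf{d}\Omega$ is determined by its contractions with triples of vectors (together with the fact that $n+2 \ge 3$ whenever $\Omega$ is nontrivial of degree $n+1 \ge 2$), this yields $\mathbf{d}\Omega = 0$ pointwise.

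I expect the main obstacle to be purely computational rather than conceptual: tracking the numerous grading signs $(-1)^{rs}$, $(-1)^{(r-1)t}$, and so on through the repeated applications of the Koszul identity, and verifying that the exact terms cancel exactly rather than merely up to a controllable error. A secondary subtlety is the degree bookkeeping at the boundary $r+s+t = n+1$, where some interior products may annihilate forms for dimensional reasons; I would need to confirm that the reduction of $T_D$ to the single $\mathbf{i}\,\mathbf{i}\,\mathbf{i}\,\mathbf{d}\Omega$ term remains valid in that edge case, and that enough nondegenerate test multivectors remain available to conclude $\mathbf{d}\Omega = 0$ in the forward direction.
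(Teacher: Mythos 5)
Your proposal is correct, but it takes a genuinely different route from the paper's proof. The paper never invokes the integrability tensor: since $D$ is given explicitly as a graph, the bracket of two sections lies in $D_{r+s-1}$ precisely when its form part equals $\mathbf{i}_{[\Gamma,\Gamma']}\Omega$ (condition \eqref{eq1proof}), and a \emph{single} application of the Koszul identity \eqref{koszulid} shows the discrepancy is exactly $(-1)^r\,\mathbf{i}_{\Gamma'}\mathbf{i}_\Gamma\mathbf{d}\Omega$; only two sections are ever involved, and both directions of the equivalence drop out simultaneously. You instead test membership indirectly, via the criterion that integrability is equivalent to $T_D \equiv 0$, i.e.\ by pairing the bracket against a third section. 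This is sound: the paper asserts that equivalence, and the tensoriality of $T_D$ established in appendix~\ref{app:tensor} legitimizes your pointwise argument with test multivectors. Moreover, your anticipated collapse does occur, and can be seen more cleanly than by grinding through the four terms of \eqref{integrability}: substituting graph sections into \eqref{dircourant} and applying \eqref{koszulid} once yields
\[
	\bracleft (\Gamma', \mathbf{i}_{\Gamma'}\Omega), (\Gamma'', \mathbf{i}_{\Gamma''}\Omega) \bracright
	= \left( [\Gamma', \Gamma''],\; \mathbf{i}_{[\Gamma', \Gamma'']}\Omega - (-1)^{s}\, \mathbf{i}_{\Gamma''}\mathbf{i}_{\Gamma'}\mathbf{d}\Omega \right),
\]
and in the pairing \eqref{inttensor} with $(\Gamma, \mathbf{i}_\Gamma\Omega) \in D_r$ the graph part is killed by isotropy, leaving $T_D = (-1)^{r(s+t-1)+s}\,\mathbf{i}_{\Gamma}\mathbf{i}_{\Gamma''}\mathbf{i}_{\Gamma'}\mathbf{d}\Omega$ --- a single term with no residual exact pieces, so the cancellations you hoped for are automatic, your boundary worry at $r+s+t = n+1$ evaporates, and the forward direction needs only $r=s=t=1$, where triple contractions determine the $(n+2)$-form $\mathbf{d}\Omega$ (note $n+2 \geq 3$ since $n \geq 1$). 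The trade-off between the two arguments: your detour costs a third multivector and heavier sign bookkeeping, but it is the argument that would survive for an abstract multi-Dirac structure, where no explicit description of $D$ is available and the isotropy pairing is the only membership test; the paper's direct check is shorter precisely because the graph description \eqref{canondirac} makes membership in $D_{r+s-1}$ verifiable by inspection.
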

\begin{proof}
The bracket is closed if, for arbitrary sections $(\Gamma, \mathbf{i}_\Gamma \Omega) \in D_r$ and $(\Gamma', \mathbf{i}_{\Gamma'} \Omega) \in D_s$, we have that $\bracleft (\Gamma, \mathbf{i}_\Gamma \Omega), (\Gamma', \mathbf{i}_{\Gamma'} \Omega) \bracright \in D_{r + s - 1}$ or 
\[
	\left( [\Gamma, \Gamma'], (-1)^{(r - 1)s} \pounds_\Gamma \Sigma' 
		- \mathbf{i}_{\Gamma'} \mathbf{d} \Sigma \right)
	\in D_{r + s - 1}
\]
with $\Sigma = \mathbf{i}_\Gamma \Omega$ and $\Sigma' = \mathbf{i}_{\Gamma'} \Omega$.  This is the case if 
\begin{equation} \label{eq1proof}
	\mathbf{i}_{[\Gamma, \Gamma']} \Omega = (-1)^{(r-1)s}\lie_\Gamma \mathbf{i}_{\Gamma'} \Omega - 
	\mathbf{i}_{\Gamma'} \mathbf{d} \mathbf{i}_\Gamma \Omega.	
\end{equation}

The left-hand side of \eqref{eq1proof} can be rewritten using the Koszul identity \eqref{koszulid} as
\begin{align*}
	\mathbf{i}_{[\Gamma, \Gamma']} \Omega & =
	(-1)^{(r - 1)s} \pounds_\Gamma \mathbf{i}_{\Gamma'} \Omega
	- \mathbf{i}_{\Gamma'} \pounds_\Gamma \Omega \\
	& = 
	(-1)^{(r - 1)s} \pounds_\Gamma \mathbf{i}_{\Gamma'} \Omega
	- \mathbf{i}_{\Gamma'} \mathbf{d} \mathbf{i}_\Gamma \Omega
	+ (-1)^r \mathbf{i}_{\Gamma'}\mathbf{i}_\Gamma  \mathbf{d}  \Omega.
\end{align*}
Comparing this with the right-hand side of \eqref{eq1proof}, we see that \eqref{eq1proof} holds if and only if 
\[
	\mathbf{i}_{\Gamma'}\mathbf{i}_\Gamma \mathbf{d} \Omega = 0,
\]	
for all $\Gamma \in \bigwedge^r(TZ)$ and $\Gamma' \in \bigwedge^s(TZ)$.  This is the case if and only if $\mathbf{d}\Omega=0$.
\end{proof}

\paragraph{Contravariant Multi-Dirac Structures.}  Given the fact that a Poisson tensor field gives rise to a Dirac structure (see \cite{Courant1990}), one can now ask whether a multivector field of higher degree induces a multi-Dirac structure.  Unfortunately, this does not hold in full generality: in \cite{Zambon2010} it is shown that if $\Lambda$ is a multivector of degree $k$ on a manifold $Z$, then $\Lambda$ gives rise to a higher-order Dirac structure if and only if either $\Lambda = 0$ or $k$ is equal to 2 (in which case $\Lambda$ is a Poisson bivector) or to the dimension of $Z$.  By the results of section~\ref{sec:hodirac}, the same is true for multi-Dirac structures.

\section{The Multi-Poisson Bracket} \label{sec:multipoisson}

In this section, we analyze some of the properties of the multi-Courant bracket.  We start from an integrable multi-Dirac structure $D$ of degree $n$, and show that the multi-Courant bracket defines a graded Poisson bracket on certain spaces of forms.  This bracket is graded anti-commutative but only satisfies the graded Jacobi identity up to exact forms.

\begin{definition}
Let $D = D_1 \oplus \cdots \oplus  D_n$ be an integrable multi-Dirac structure of degree $n$ on $Z$.
	A $(n - r)$-form $\Sigma$ is said to be \emph{\textbf{admissible}} if there exists an $r$-multivector field $\Gamma_\Sigma$ such that $(\Gamma_\Sigma, \mathbf{d}\Sigma) \in D_r$, where $r = 1, \ldots, n$.
\end{definition}

We define a new grading on the space of admissible forms as follows: let $\Sigma$ be an admissible $s$-form.  We then define the new degree of $\Sigma$ to be given by 
\begin{equation} \label{grading}
	\left| \Sigma \right| := n - s -1.
\end{equation}
The set of all admissible forms of degree $k$ (in the sense of definition \eqref{grading}) is denoted by $\Omega^k_{\text{adm}}$, where $k = 0, \ldots, n - 1$.  

Following \cite{CaIbLe1996}, we define a graded Poisson bracket on the space on admissible forms as follows.  Let $\Sigma \in \Omega^k_{\text{adm}}$ and $\Sigma' \in \Omega^l_{\text{adm}}$ and denote the corresponding multivector fields by $\Gamma_\Sigma$ and $\Gamma_{\Sigma'}$.  We then let
\[
	\{ \Sigma, \Sigma' \} := -(-1)^k\mathbf{i}_{\Gamma_{\Sigma'}}\mathbf{d}\Sigma.
\]

Note that the right-hand side only depends on $\Sigma'$ and not on the particular choice of multivector field $\Gamma_{\Sigma'}$.  Indeed, let $\bar{\Gamma}_{\Sigma'}$ be any other multivector field so that $(\bar{\Gamma}_{\Sigma'}, \mathbf{d}\Sigma') \in D_l$.  Since $(\bar{\Gamma}_{\Sigma'} - \Gamma_{\Sigma'}, 0) \in D_l$, we have that $\left<\!\left< (\bar{\Gamma}_{\Sigma'} - \Gamma_{\Sigma'}, 0), (\Gamma_{\Sigma}, \mathbf{d}\Sigma \right>\!\right>_- = 0$, or 
\[
	\mathbf{i}_{\bar{\Gamma}_{\Sigma'}} \mathbf{d}\Sigma
	=
	\mathbf{i}_{\Gamma_{\Sigma'}} \mathbf{d}\Sigma.
\]

Secondly, let $\Sigma \in \Omega^k_{\text{adm}}$ and $\Sigma' \in \Omega^l_{\text{adm}}$ and note that
\[
	\left| \{ \Sigma, \Sigma' \} \right| = \left| \Sigma \right| 
		+ \left| \Sigma' \right|. 
\]
We now show that the bracket is again admissible.  Indeed, this follows easily from the fact that  
\[
	\bracleft (\Gamma_{\Sigma}, \mathbf{d} \Sigma),  (\Gamma_{\Sigma'}, \mathbf{d} \Sigma') \bracright
		=
	( [\Gamma_\Sigma, \Gamma_{\Sigma'}], (-1)^{kl} \mathbf{d} \{ \Sigma, \Sigma' \} ),
\]
where $k = \left| \Sigma \right|$ and $l = \left| \Sigma' \right|$.  

We show the multi-Poisson bracket is graded anti-commutative.  Let $\Sigma, \Sigma'$ be admissible forms with $\left| \Sigma \right| = k$ and $\left| \Sigma' \right| = l$, and consider the associated multivector fields $\Gamma_\Sigma$ and $\Gamma_{\Sigma'}$.  We then have that 
\begin{align*}
 \left<\!\left< (\Gamma_\Sigma, \mathbf{d} \Sigma), 
	(\Gamma_{\Sigma'}, \mathbf{d} \Sigma') \right>\!\right>_- 
	& = \frac{1}{2} \left( \mathbf{i}_{\Gamma'} \mathbf{d} \Sigma - (-1)^{(k-1)(l-1)} \mathbf{i}_{\Gamma}\mathbf{d} \Sigma' \right) \\
	& = \frac{(-1)^k}{2} \left( \{\Sigma, \Sigma'\} + (-1)^{kl} 
	 	\{\Sigma', \Sigma\} \right),
\end{align*}
but since the left-hand side is zero, we conclude that 
\begin{equation}
\{\Sigma, \Sigma'\} = -(-1)^{kl} 
	 	\{\Sigma', \Sigma\}.
\end{equation}

A natural question now is whether the multi-Poisson bracket satisfies the graded Jacobi identity.  This turns out to not be the case, even when the multi-Dirac structure is integrable, as is shown in the following lemma.

\begin{lemma}
	Consider an integrable multi-Dirac structure $D$ of degree $n$.
	Let $\Sigma \in \Omega^k_{\text{adm}}$,  $\Sigma' \in \Omega^l_{\text{adm}}$, and $\Sigma'' \in \Omega^m_{\text{adm}}$ be admissible forms.  The multi-Poisson bracket satisfies the relation
	\begin{equation} \label{gradedjacobi}
	\begin{split}
		 	(-1)^{km} \{ \{ \Sigma, \Sigma'\}, \Sigma'' \}
			+  
			(-1)^{lk} \{ \{ \Sigma', \Sigma''\}, \Sigma\}
						+  
			(-1)^{ml} \{ \{ \Sigma'', \Sigma\}, \Sigma'\} \\
			=
			(-1)^{(k + l)(m - 1)} \mathbf{d} 
			\mathbf{i}_{\Gamma_{\Sigma'}}
			\mathbf{i}_{\Gamma_{\Sigma''}} \mathbf{d}\Sigma,  
	\end{split}
	\end{equation}
	where $\Gamma_{\Sigma}$, $\Gamma_{\Sigma'}$ and $\Gamma_{\Sigma''}$ are the multivector fields associated to $\Sigma$, $\Sigma'$ and $\Sigma''$, respectively.
\end{lemma}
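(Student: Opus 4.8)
The plan is to reduce the graded Jacobi identity for the multi-Poisson bracket to the integrability tensor $T_D$, which vanishes by hypothesis. The starting point is the relation established just above the lemma: for admissible forms the multi-Courant bracket satisfies $\bracleft (\Gamma_\Sigma, \mathbf{d}\Sigma), (\Gamma_{\Sigma'}, \mathbf{d}\Sigma') \bracright = ([\Gamma_\Sigma, \Gamma_{\Sigma'}], \pm\mathbf{d}\{\Sigma, \Sigma'\})$, so that $\{\Sigma, \Sigma'\}$ is again admissible, with associated multivector field a multiple of $[\Gamma_\Sigma, \Gamma_{\Sigma'}]$. First I would compute a single iterated bracket directly from the definition. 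Since $\mathbf{d}\{\Sigma, \Sigma'\} = -(-1)^k \mathbf{d}\mathbf{i}_{\Gamma_{\Sigma'}} \mathbf{d}\Sigma = -(-1)^k \pounds_{\Gamma_{\Sigma'}} \mathbf{d}\Sigma$ (using $\mathbf{d}^2 = 0$), one obtains $\{\{\Sigma, \Sigma'\}, \Sigma''\} = (-1)^l \mathbf{i}_{\Gamma_{\Sigma''}} \pounds_{\Gamma_{\Sigma'}} \mathbf{d}\Sigma$. The crucial observation is that this is, up to sign, exactly one of the three Lie-derivative terms in the expression \eqref{integrability} for $T_D$ when that tensor is evaluated on the admissible sections $(\Gamma_\Sigma, \mathbf{d}\Sigma)$, $(\Gamma_{\Sigma'}, \mathbf{d}\Sigma')$, $(\Gamma_{\Sigma''}, \mathbf{d}\Sigma'')$.

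Next I would assemble the left-hand side of \eqref{gradedjacobi}. Writing out the three iterated brackets by the formula above (and its two cyclic images), each becomes a term of the form $\mathbf{i}_{\Gamma''} \pounds_{\Gamma'} \mathbf{d}\Sigma$, so that the cyclic sum reproduces the three Lie-derivative contributions of $T_D((\Gamma_\Sigma, \mathbf{d}\Sigma), (\Gamma_{\Sigma'}, \mathbf{d}\Sigma'), (\Gamma_{\Sigma''}, \mathbf{d}\Sigma''))$ recorded in Lemma~\ref{T_D_expression}. The remaining term of \eqref{integrability}, namely $\pm\mathbf{d}\mathbf{i}_{\Gamma_{\Sigma'}} \mathbf{i}_{\Gamma_{\Sigma''}} \mathbf{d}\Sigma$, is precisely the exact form on the right-hand side of \eqref{gradedjacobi}. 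Since $D$ is integrable, $T_D \equiv 0$, so the sum of the three Lie-derivative terms equals the negative of this exact term, which after matching signs is the claimed identity.

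The hard part will be the sign and coefficient bookkeeping. The three iterated brackets in \eqref{gradedjacobi} carry prefactors $(-1)^{km}, (-1)^{lk}, (-1)^{ml}$, whereas the Lie-derivative terms of \eqref{integrability} carry prefactors dictated by the bidegrees $r = k+1$, $s = l+1$, $t = m+1$, and these do not agree termwise. To reconcile them I would invoke the maximal $s$-isotropy of $D$ in the form $\mathbf{i}_{\Gamma_{\Sigma'}} \mathbf{d}\Sigma = (-1)^{(k+1)(l+1)} \mathbf{i}_{\Gamma_\Sigma} \mathbf{d}\Sigma'$ and its cyclic analogues, together with the Koszul identity \eqref{koszulid} to interchange $\mathbf{i}_{\Gamma''} \pounds_{\Gamma'}$ with $\pounds_{\Gamma'} \mathbf{i}_{\Gamma''}$; the latter splits off a $\mathbf{d}$-exact piece plus a Schouten-bracket term $\mathbf{i}_{[\Gamma', \Gamma'']} \mathbf{d}\Sigma$. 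These Schouten-bracket terms can themselves be re-identified with double brackets through the admissibility relation $\mathbf{i}_{[\Gamma_{\Sigma'}, \Gamma_{\Sigma''}]} \mathbf{d}\Sigma = \pm\{\{\Sigma', \Sigma''\}, \Sigma\}$, so the whole manipulation stays within the span of double brackets and exact forms and closes up on itself.

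As a consistency check I would note the structural route via Theorem~\ref{thm:gerstenhaber}: the multi-Courant Jacobiator of the three admissible sections equals $\pm\mathbf{d} T_D$ as an identity, and its form component is, up to sign, $\mathbf{d}$ applied to the left-hand side of \eqref{gradedjacobi}. This shows at once that the left-hand side of \eqref{gradedjacobi} is closed; however, identifying it with the specific exact form $\pm\mathbf{d}\mathbf{i}_{\Gamma_{\Sigma'}}\mathbf{i}_{\Gamma_{\Sigma''}}\mathbf{d}\Sigma$ still needs the explicit computation described above, so I would treat the Jacobiator identity only as corroboration rather than as the proof itself.
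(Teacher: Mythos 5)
Your proposal is correct and takes essentially the same route as the paper's proof: the paper likewise computes the iterated bracket from the definition as $\{\{\Sigma,\Sigma'\},\Sigma''\} = \pm\, \mathbf{i}_{\Gamma_{\Sigma''}} \pounds_{\Gamma_{\Sigma'}} \mathbf{d}\Sigma$ (using $\mathbf{d}^2 = 0$), matches the cyclic sum of double brackets against the three Lie-derivative terms of $T_D$ in Lemma~\ref{T_D_expression} with the remaining term $\mathbf{d}\, \mathbf{i}_{\Gamma_{\Sigma'}} \mathbf{i}_{\Gamma_{\Sigma''}} \mathbf{d}\Sigma$ supplying the right-hand side of \eqref{gradedjacobi}, and concludes by $T_D \equiv 0$. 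The additional isotropy/Koszul reshuffling you anticipate in your third paragraph is not needed in the paper's argument, which absorbs all sign discrepancies into the single overall prefactor $-(-1)^{k(m-1)}$, and your closing Jacobiator check is likewise only corroboration, exactly as you label it.
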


\begin{proof}
This follows easily from the observation that 
\[
	\{ \{ \Sigma, \Sigma' \}, \Sigma'' \}  =
	(-1)^{k + l} \mathbf{i}_{\Gamma_{\Sigma''}} \mathbf{d} \{\Sigma, \Sigma' \}  = (-1)^k \mathbf{i}_{\Gamma_{\Sigma''}} \mathbf{d}
		\mathbf{i}_{\Gamma_{\Sigma'}} \mathbf{d} \Sigma
	 = (-1)^k \mathbf{i}_{\Gamma_{\Sigma''}} 
		\pounds_{\Gamma_{\Sigma'}} \mathbf{d} \Sigma
\]
and the expression \eqref{integrability} for $T_D$: putting everything together, we have that 
\begin{multline*}
	T_D( (\Gamma_{\Sigma}, \mathbf{d} \Sigma), 
	(\Gamma_{\Sigma'}, \mathbf{d} \Sigma'), 
	(\Gamma_{\Sigma''}, \mathbf{d} \Sigma'') ) = \\
	-(-1)^{k(m-1)} \Big( 
	(-1)^{km} \{ \{ \Sigma, \Sigma'\}, \Sigma'' \}
			+  
			(-1)^{lk} \{ \{ \Sigma', \Sigma''\}, \Sigma\}
						+  
			(-1)^{ml} \{ \{ \Sigma'', \Sigma\}, \Sigma'\} \\
		- (-1)^{(k + l)(m - 1)} \mathbf{d} 
			\mathbf{i}_{\Gamma_{\Sigma'}}
			\mathbf{i}_{\Gamma_{\Sigma''}} \mathbf{d}\Sigma \Big),
\end{multline*}
while the left-hand side vanishes as $D$ is integrable.
\end{proof}

We summarize the properties of the multi-Poisson bracket in the following theorem: 
\begin{theorem}
	The multi-Poisson bracket is graded anti-commutative, and satisfies the graded Jacobi identity up to exact forms. 
\end{theorem}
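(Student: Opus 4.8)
The plan is to treat this theorem as a bookkeeping summary of the facts already assembled in this section, so that the proof amounts to citing two earlier computations and recording one final observation. No genuinely new calculation should be needed: both assertions have in essence already been derived, and the only point that will demand attention is confirming that the cyclic identity \eqref{gradedjacobi} is the graded Jacobiator in the correct sign convention.

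For graded anti-commutativity I would simply invoke the computation carried out immediately before the preceding lemma, where the vanishing of the graded antisymmetric pairing $\pairleft (\Gamma_\Sigma, \mathbf{d}\Sigma), (\Gamma_{\Sigma'}, \mathbf{d}\Sigma') \pairright_-$ was rearranged into $\{\Sigma, \Sigma'\} = -(-1)^{kl}\{\Sigma', \Sigma\}$ for admissible forms with $k = |\Sigma|$ and $l = |\Sigma'|$. This is exactly the graded anti-commutativity asserted in the theorem, so nothing further is required for the first claim.

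For the graded Jacobi identity up to exact forms, the essential input is the preceding lemma. Its conclusion \eqref{gradedjacobi} equates the cyclic sum
\[
	(-1)^{km}\{\{\Sigma,\Sigma'\},\Sigma''\}
	+ (-1)^{lk}\{\{\Sigma',\Sigma''\},\Sigma\}
	+ (-1)^{ml}\{\{\Sigma'',\Sigma\},\Sigma'\}
\]
with the single term $(-1)^{(k+l)(m-1)} \mathbf{d}\, \mathbf{i}_{\Gamma_{\Sigma'}} \mathbf{i}_{\Gamma_{\Sigma''}} \mathbf{d}\Sigma$. The one observation left to make is that this right-hand side is manifestly exact, being of the form $\mathbf{d}(\,\cdot\,)$; hence the cyclic sum vanishes modulo exact forms, which is precisely the graded Jacobi identity up to exact forms.

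The main point requiring care — and the only place where the argument could go astray — is verifying that the displayed cyclic sum is genuinely the graded Jacobiator of the multi-Poisson bracket. Since this bracket has degree zero with respect to the grading \eqref{grading} (recall that $\left|\{\Sigma,\Sigma'\}\right| = \left|\Sigma\right| + \left|\Sigma'\right|$), the standard graded Jacobi identity for elements of degrees $k, l, m$ carries exactly the signs $(-1)^{km}, (-1)^{lk}, (-1)^{ml}$ appearing in \eqref{gradedjacobi}. Once this matching of conventions is checked, the theorem follows at once from the two cited results, with the exactness of the obstruction term supplying the qualifier \emph{up to exact forms}.
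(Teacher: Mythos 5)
Your proposal is correct and takes essentially the same route as the paper, which states this theorem without further proof precisely because it is a summary of the anti-commutativity computation and the lemma establishing \eqref{gradedjacobi}, whose right-hand side is manifestly exact. Your additional check that the signs $(-1)^{km}, (-1)^{lk}, (-1)^{ml}$ are the standard graded Jacobiator signs for a bracket of degree zero in the grading \eqref{grading} is a sensible verification, not a deviation from the paper's argument.
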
 

The fact that the graded Jacobi identity is satisfied only up to exact forms is not new and arises also when considering multi-Poisson brackets associated to multisymplectic structures; see \cite{FoPaRo2005}.  Following a similar procedure as in \cite{CaIbLe1996}, we may now consider the quotient space of admissible forms modulo exact forms (which are trivially admissible).  The multi-Poisson bracket drops to this space, and endows it with the structure of a graded Lie algebra. 

Another interpretation was given by \cite{BaHoRo2010} (see also the work of \cite{Rogers2010b} and \cite{Zambon2010}), where the exact forms on the right-hand side of \eqref{gradedjacobi} signal the fact that the space of admissible forms has the structure of an {\bfi $L_\infty$-algebra}.


\section{Outlook and Future Work}

In this paper, we have focused on the definition of multi-Dirac structures and the induced Gerstenhaber algebra on the space of sections.  In a previous paper, we have shown that multi-Dirac structures play a similar role in classical field theory as standard Dirac structures in mechanics.  Much remains to be done, and we now give an outline of a few interesting questions related to the geometry of multi-Dirac structures as well as their applications in classical field theory.

\begin{itemize}
	\item Many geometric structures have a straightforward, natural interpretation in terms of \emph{graded geometry} (see \cite{CaZa2009}).  Given that multi-Dirac structures appear as a graded analogue of standard Dirac structures, a natural question is therefore whether multi-Dirac structures can be understood in terms of graded geometry as well.
	
	\item Multi-Dirac structures are in a one-to-one correspondence to higher-order Dirac structures (see \cite{Zambon2010}), but their algebraic properties seem at first sight to be different.  While the space of sections of a multi-Dirac structure is equipped with the structure of a Gerstenhaber algebra, higher-order Dirac structures instead give rise to \emph{$L_\infty$-algebras}.  It would be of considerable interest to understand the link between these two algebraic structures directly.  
	
	\item In the context of classical field theories, a few questions naturally arise.  First of all, in addition to the multi-Poisson brackets in section~\ref{sec:multipoisson}, there are several other kinds of brackets that can be used to write down the field equations (see \cite{MaMoMo1986, MarcoMarsden, FoRo2005} and the references therein).  Furthermore, in \cite{Bridges05} multi-symplectic structures were considered on the exterior algebra of a Riemannian manifold. It would be interesting to see if these brackets and structures induce corresponding multi-Dirac structures.  
	
	In this context we also mention the concept of \emph{Stokes-Dirac structures} (see \cite{VaMa2002}), where the calculus of forms on a manifold and the Stokes theorem are used to develop a concept of Dirac structures suitable for field theories.  In \cite{VaYoLeMa2010}, we have shown that these structures arise through Poisson reduction of infinite-dimensional structures.  When a space-time splitting is chosen, it can be shown that a multi-Dirac structure induces an infinite-dimensional Dirac structure on the space of fields.  The precise relation between both points of view and the implications for the different kinds of Poisson brackets mentioned before will be the subject of forthcoming work.
	
	One further question relates to the formulation of the field equations of classical field theory.  When a multi-Dirac structure $D=D_1 \oplus \cdots \oplus D_n$ on the  jet extension $J^1 Y$ of a fiber bundle $\pi : Y \to X$ is given, we showed in \cite{VaYoLeMa2011} that the field equations can then be expressed in terms of $D_n$.  It is not clear, however, what role is played by the other components $D_r$, with $r < n$.
	
One answer might be found in the link with admissible forms and multi-Poisson brackets of section~\ref{sec:multipoisson}.  Admissible forms of degree $n$ can be integrated over spatial hypersurfaces in $X$ and correspond to classical observables of the theory (see \cite{kijowski}).  In the same vein, admissible forms of lower degree might correspond to observables with support on lower-dimensional submanifolds.

\end{itemize}

\appendix
\section{Properties of Multivector Fields and Generalized Lie Derivatives} \label{sec:lie}

In this appendix we give a quick account of multivector fields and their properties.  We follow the exposition and the sign conventions for multivectors and Lie derivatives of \cite{FoPaRo2005} and \cite{Marle1997}.

\paragraph{Multivector Fields.}

Let $M$ be a manifold.  A {\bfi multivector field} of degree $k$ on $M$ is a section of the $k$-th exterior power $\bigwedge^k (TM)$.  We say that a $k$-multivector field $\Gamma$ is {\bfi decomposable} if it can be written as a wedge product of $k$ vector fields $X_1, \ldots, X_k$ on $M$:
\[
	\Gamma = X_1 \wedge \cdots \wedge X_k.
\]
We denote the space of all $k$-multivector fields on $M$ by $\mathfrak{X}^k(M)$.  The {\bfi Schouten-Nijenhuis bracket} is the unique $\mathbb{R}$-bilinear map 
\[
	[\cdot, \cdot] : \mathfrak{X}^k(M) \times \mathfrak{X}^l(M)
		\rightarrow
		\mathfrak{X}^{k + l - 1}(M)
\]
with the following properties: for multivector fields $\Gamma, \Gamma'$ and $\Gamma''$ of degree $k, l$ and $m$ respectively, we have that
\begin{enumerate}
\item The Schouten-Nijenhuis bracket vanishes on functions: for $f, g \in C^\infty(M)$, we have $[f, g] = 0$; 

\item It is graded anticommutative: 
\[
	[\Gamma, \Gamma'] = - (-1)^{(k - 1)(l - 1)} [\Gamma', \Gamma];
\]
\item It coincides with the Lie bracket when restricted to vector fields; 
\item It satisfies the graded Leibniz rule: 
\[
	[\Gamma, \Gamma' \wedge \Gamma''] 
	= 
	[\Gamma, \Gamma'] \wedge \Gamma'' 
	+ (-1)^{(k-1)l} \Gamma' \wedge [\Gamma, \Gamma'']; 
\]
\item It satisfies the graded Jacobi identity:
\[
	(-1)^{(k-1)(m-1)} [\Gamma, [\Gamma', \Gamma'']] + 
		\text{cyclic perm.} = 0.
\]
\end{enumerate}

It follows that the set $\mathfrak{X}(M)$ of all multivector fields equipped with the Schouten-Nijenhuis bracket forms a {\bfi Gerstenhaber algebra} (see \cite{Ge1963}).

\paragraph{The Lie Derivative.}

  We define the {\bfi contraction} of a decomposable multi-vector field $\Gamma = X_1 \wedge \cdots \wedge X_k$ and a differential form $\alpha$ to be
\[
	\mathbf{i}_\Gamma \alpha = \mathbf{i}_{X_k} \ldots \mathbf{i}_{X_1} \alpha.
\]
This operation can then be extended to arbitrary, non-decomposable multi-vector fields by linearity.  The {\bfi Lie derivative} $\pounds_\Gamma \alpha$ of the form $\alpha$ along a multi-vector field $\Gamma$ of degree $k$ is defined by means of a generalization of Cartan's formula:
\[
	\pounds_\Gamma \alpha = \mathbf{d} \mathbf{i}_\Gamma
		- (-1)^k \mathbf{i}_\Gamma \mathbf{d} \alpha, 
\]
where $k$ is the degree of $\Gamma$.  The Lie derivative satisfies the following useful properties:

\begin{proposition}[Prop.~A.3 in \cite{FoPaRo2005}]
	For any two multi-vector fields $\Gamma$ and $\Gamma'$ of degree $k$ and $l$ respectively, and any differential form $\alpha$, we have 
	\begin{gather}
		\mathbf{d} \pounds_\Gamma \alpha = (-1)^{k - 1} \pounds_\Gamma \mathbf{d} \alpha \\[2ex]
		\label{koszulid}
		\mathbf{i}_{[\Gamma, \Gamma']} \alpha = (-1)^{(k - 1)l} 
			\pounds_\Gamma \mathbf{i}_{\Gamma'} \alpha -
			\mathbf{i}_{\Gamma'} \pounds_{\Gamma} \alpha \\[2ex]
		\pounds_{[\Gamma, \Gamma']} \alpha = 
			(-1)^{(k - 1)(l - 1)} \pounds_\Gamma \pounds_{\Gamma'} \alpha
			- \pounds_{\Gamma'} \pounds_{\Gamma} \alpha \\[2ex]
		\pounds_{\Gamma \wedge \Gamma'} \alpha = 
			(-1)^l \mathbf{i}_{\Gamma'} \pounds_\Gamma \alpha 
			+ \pounds_{\Gamma'} \mathbf{i}_\Gamma \alpha.
	\end{gather}
\end{proposition}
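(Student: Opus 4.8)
The plan is to organize all four identities around the operator reformulation of the generalized Cartan formula, $\pounds_\Gamma \alpha = \mathbf{d}\,\mathbf{i}_\Gamma \alpha - (-1)^k \mathbf{i}_\Gamma\,\mathbf{d}\alpha$, read as the graded commutator of $\mathbf{d}$ (of degree $+1$) with the contraction $\mathbf{i}_\Gamma$ (of degree $-k$) acting on forms. Three of the four identities then fall out almost mechanically, and only the Koszul identity \eqref{koszulid} requires genuine work.

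First I would dispatch the opening identity $\mathbf{d}\pounds_\Gamma\alpha = (-1)^{k-1}\pounds_\Gamma\mathbf{d}\alpha$. Applying $\mathbf{d}$ to the Cartan formula and using $\mathbf{d}^2 = 0$ gives $\mathbf{d}\pounds_\Gamma\alpha = -(-1)^k\mathbf{d}\,\mathbf{i}_\Gamma\,\mathbf{d}\alpha$, while the Cartan formula applied to $\mathbf{d}\alpha$ gives $\pounds_\Gamma\mathbf{d}\alpha = \mathbf{d}\,\mathbf{i}_\Gamma\,\mathbf{d}\alpha$; comparing the two yields the stated sign. The fourth, Leibniz-type identity is equally direct: expanding both sides through the Cartan formula, using the composition rule $\mathbf{i}_{\Gamma\wedge\Gamma'} = \mathbf{i}_{\Gamma'}\mathbf{i}_\Gamma$ (immediate on decomposables from the definition of contraction and extended by bilinearity) together with $\mathbf{d}^2 = 0$, both sides collapse to $\mathbf{d}\,\mathbf{i}_{\Gamma'}\mathbf{i}_\Gamma\alpha - (-1)^{k+l}\mathbf{i}_{\Gamma'}\mathbf{i}_\Gamma\mathbf{d}\alpha$. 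Neither of these requires induction.

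The Koszul identity \eqref{koszulid} is the crux. Here I would argue by induction, taking as the base case the classical Cartan magic identity $\mathbf{i}_{[X,Y]} = \pounds_X\mathbf{i}_Y - \mathbf{i}_Y\pounds_X$ for ordinary vector fields. The inductive step builds $\Gamma$ up one vector-field factor at a time: writing $\Gamma = X\wedge\Gamma_0$ with $\Gamma_0$ of degree $k$, I would combine the graded Leibniz rule for the Schouten--Nijenhuis bracket (transferred to the first slot using graded anticommutativity), the composition rule $\mathbf{i}_{X\wedge\Gamma_0} = \mathbf{i}_{\Gamma_0}\mathbf{i}_X$, and the already-proven fourth identity $\pounds_{X\wedge\Gamma_0}\alpha = (-1)^k\mathbf{i}_{\Gamma_0}\pounds_X\alpha + \pounds_{\Gamma_0}\mathbf{i}_X\alpha$, substituting the induction hypothesis for $\Gamma_0$ and for $X$. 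The main obstacle is precisely here: since the Schouten--Nijenhuis bracket is specified only axiomatically, there is no closed formula to expand, and the entire content lies in matching the bracket produced by the operator $\pounds_\Gamma\mathbf{i}_{\Gamma'} - (-1)^{(k-1)l}\mathbf{i}_{\Gamma'}\pounds_\Gamma$ against $\mathbf{i}_{[\Gamma,\Gamma']}$ while keeping the signs $(-1)^{(k-1)l}$ consistent through each wedge factor in the conventions of \cite{FoPaRo2005} and \cite{Marle1997}. (Conceptually, the cleanest justification is that the graded commutator of the first-order operator $\pounds_\Gamma$ with the tensorial contraction $\mathbf{i}_{\Gamma'}$ is again a tensorial, degree-lowering operator, hence equals $\mathbf{i}_{\{\Gamma,\Gamma'\}}$ for a unique multivector field $\{\Gamma,\Gamma'\}$; one then checks that $\{\cdot,\cdot\}$ obeys the defining axioms of the Schouten bracket and invokes uniqueness.)

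Finally, the third identity follows formally from the first two with no further induction. Reading \eqref{koszulid} as $\mathbf{i}_{[\Gamma,\Gamma']} = (-1)^{(k-1)l}(\pounds_\Gamma\mathbf{i}_{\Gamma'} - (-1)^{(k-1)l}\mathbf{i}_{\Gamma'}\pounds_\Gamma)$, I would apply $\mathbf{d}\,\cdot - (\pm)\,\cdot\,\mathbf{d}$ and use the Cartan formula $\pounds_{[\Gamma,\Gamma']} = \mathbf{d}\,\mathbf{i}_{[\Gamma,\Gamma']} - (-1)^{k+l-1}\mathbf{i}_{[\Gamma,\Gamma']}\mathbf{d}$, the graded Jacobi identity for operator commutators, the operator form $\mathbf{d}\pounds_\Gamma = (-1)^{k-1}\pounds_\Gamma\mathbf{d}$ of the first identity, and $\mathbf{d}\,\mathbf{i}_{\Gamma'} - (-1)^l\mathbf{i}_{\Gamma'}\mathbf{d} = \pounds_{\Gamma'}$. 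Collecting terms gives $\pounds_{[\Gamma,\Gamma']}\alpha = (-1)^{(k-1)(l-1)}\pounds_\Gamma\pounds_{\Gamma'}\alpha - \pounds_{\Gamma'}\pounds_\Gamma\alpha$, the sign exponents simplifying via $(k-1)l + 1 - k = (k-1)(l-1)$ and $(1-k)(1-l) = (k-1)(l-1)$. This reduces the whole proposition to the single hard lemma \eqref{koszulid}.
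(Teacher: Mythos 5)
This proposition is imported background: the paper states it with a citation to Prop.~A.3 of \cite{FoPaRo2005} and gives no proof at all, so there is no in-paper argument to compare against; your proposal has to stand on its own. For what it is worth, the route you sketch (operator calculus from the generalized Cartan formula, with the Koszul identity as the one hard lemma proved by induction on decomposable multivector fields) is essentially the standard argument in the cited references \cite{FoPaRo2005, Marle1997}. Your treatment of the first and fourth identities is correct: both follow directly from $\pounds_\Gamma = \mathbf{d}\,\mathbf{i}_\Gamma - (-1)^k \mathbf{i}_\Gamma \mathbf{d}$, $\mathbf{d}^2 = 0$, and $\mathbf{i}_{\Gamma \wedge \Gamma'} = \mathbf{i}_{\Gamma'}\mathbf{i}_\Gamma$ (valid in the paper's convention $\mathbf{i}_\Gamma \alpha = \mathbf{i}_{X_k}\cdots\mathbf{i}_{X_1}\alpha$), and I verified that both sides of the fourth identity do collapse to $\mathbf{d}\,\mathbf{i}_{\Gamma'}\mathbf{i}_\Gamma\alpha - (-1)^{k+l}\mathbf{i}_{\Gamma'}\mathbf{i}_\Gamma\mathbf{d}\alpha$ as you claim. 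Your derivation of the third identity from the first identity plus \eqref{koszulid} also checks out: the four terms group into $-\pounds_{\Gamma'}\pounds_\Gamma\alpha$ and $(-1)^{(k-1)(l+1)}\pounds_\Gamma\pounds_{\Gamma'}\alpha$, and $(-1)^{(k-1)(l+1)} = (-1)^{(k-1)(l-1)}$.

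The genuine gap is that the Koszul identity, which you correctly identify as the crux, is never actually proved, and both of your proposed routes have concrete defects. First, the induction base is too small: the classical identity $\mathbf{i}_{[X,Y]} = \pounds_X \mathbf{i}_Y - \mathbf{i}_Y \pounds_X$ covers only $k = l = 1$, but your inductive step, which peels one vector factor off $\Gamma$, needs the identity for a vector field $\Gamma = X$ against an \emph{arbitrary} $\Gamma'$ of degree $l$; that case is not classical and requires its own induction on $l$ (via the Leibniz rule for the Schouten bracket and the fourth identity), so the argument is really a double induction, and none of the sign bookkeeping --- which you yourself concede is ``the entire content'' --- is carried out. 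Second, the parenthetical shortcut is invalid as stated: knowing that the graded commutator $\pounds_\Gamma \mathbf{i}_{\Gamma'} - (-1)^{(k-1)l}\mathbf{i}_{\Gamma'}\pounds_\Gamma$ is tensorial and lowers form degree by $k + l - 1$ does \emph{not} force it to equal $\mathbf{i}_\Lambda$ for a multivector field $\Lambda$: a $C^\infty(Z)$-linear degree-lowering operator can be prescribed independently on each form degree (e.g., equal to a contraction on $p$-forms for one fixed $p$ and zero elsewhere), whereas interior products act coherently across all degrees. Moreover, even the $C^\infty$-linearity of that commutator is not free, since $\pounds_\Gamma(f\alpha) = f\pounds_\Gamma\alpha + \mathbf{d}f \wedge \mathbf{i}_\Gamma\alpha - (-1)^k \mathbf{i}_\Gamma(\mathbf{d}f \wedge \alpha)$, and one must check that these correction terms cancel in the commutator --- a computation of essentially the same weight as the inductive proof it was meant to replace. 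So three of the four identities are done, but the lemma on which your whole scheme (including your own derivation of the third identity) rests remains a plan rather than a proof.
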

Throughout the paper, \eqref{koszulid} is referred to as the {\bfi Koszul identity}.

\section{The Integrability Tensor $T_D$} \label{app:tensor}

In this appendix, we show that the expression \eqref{inttensor} for $T_D$ determines a tensor, by extending the argument due to \cite{Courant1990} to the case of multi-Dirac structures.  Straightforward algebraic manipulations show that $T_D$ is the restriction to $D$ of the following form:
\begin{gather*}
	T((\Gamma, \Sigma), (\Gamma', \Sigma'), (\Gamma'', \Sigma''))  = (-1)^{(r-1)t} \Big[
	-\frac{1}{3}  (-1)^{rs} \mathbf{d} \mathbf{i}_{\Gamma}  \left<\!\left< (\Gamma', \Sigma'), (\Gamma'', \Sigma'') \right>\!\right>_+ \\
	+ (-1)^{r(s-1)} \left(  \mathbf{i}_\Gamma  \mathbf{d}  \left<\!\left< (\Gamma', \Sigma'), (\Gamma'', \Sigma'') \right>\!\right>_+ - (-1)^t \mathbf{i}_{\Gamma} \mathbf{i}_{\Gamma''} \mathbf{d} \Sigma' \right) 
		+ \text{(c. p.)} \Big],
\end{gather*}
where $\text{(c. p.)}$ stands for cyclic permutations of the terms between square brackets.  It is not hard to show that $T$ is graded anticommutative in its arguments:
\[
	T((\Gamma, \Sigma), (\Gamma'', \Sigma''), (\Gamma, \Sigma))	=
	- (-1)^{(s - 1)(t - 1)} T((\Gamma, \Sigma), (\Gamma', \Sigma'), (\Gamma'', \Sigma'')), 
\]
and similarly for permutations of the other arguments.  From the definition \eqref{inttensor}, we have now that $T_D$ is obviously $\mathcal{F}(Z)$-linear in its first argument, and as $T_D$ is the restriction to $D$ of a graded anticommutative expression, it must be linear in the other arguments as well.  We conclude that $T_D$ is a tensor.


\end{document}